\newtheorem{theorem}{Theorem}
\newtheorem{lemma}{Lemma}
\newtheorem{definition}{Definition}
\newtheorem{remark}{Remark}
\newcommand{\leqnomode}{\tagsleft@true}
\newcommand{\reqnomode}{\tagsleft@false}
\def\({\begin{eqnarray}}
\def\){\end{eqnarray}}
\def\[{\begin{eqnarray*}}
\def\]{\end{eqnarray*}}
\def\part#1#2{\frac{\partial #1}{\partial #2}}
\def\R{\mathbb{R}}
\def\N{\mathbb{N}}
\def\d{\mathrm{d}}
\def\tot#1#2{\frac{\d #1}{\d #2}}
\def\eps{\varepsilon}
\def\L{\mathcal{L}}
\def\P{P}     			
\def\M{\mathbb{M}}         	
\def\Mexp{\M_\mathrm{exp}} 	
\def\K{\mathbb{K}}         	
\def\cJH#1{\textcolor{blue}{\bf [#1]}}
\begin{document}

\title{Exponential asymptotic flocking in the Cucker-Smale model with distributed reaction delays}   
\author{Jan Haskovec \qquad Ioannis Markou}         
\date{}  
\maketitle

\begin{abstract}
We study a variant of the Cucker-Smale system with distributed reaction delays.
Using backward-forward and stability estimates
on the quadratic velocity fluctuations
we derive sufficient conditions
for asymptotic flocking of the solutions.
The conditions are formulated in terms
of moments of the delay distribution and
they guarantee exponential decay
of velocity fluctuations towards zero for large times.
We demonstrate the applicability
of our theory to particular delay distributions - exponential,
uniform and linear. For the exponential distribution,
the flocking condition can be resolved analytically,
leading to an explicit formula. For the other two distributions,
the satisfiability of the assumptions is investigated numerically.
\end{abstract}
\vspace{2mm}

\textbf{Keywords}: Cucker-Smale system, flocking, distributed time delay, velocity fluctuation.
\vspace{2mm}

\textbf{2010 MR Subject Classification}: 34K05, 82C22, 34D05, 92D50.
\vspace{2mm}

\section{Introduction}\label{sec:Intro}
Individual-based models of collective behavior attracted the interest of researchers
in several scientific disciplines. A particularly interesting aspect of the dynamics of multi-agent systems
is the emergence of global self-organizing patterns, while individual agents
typically interact only locally. This is observed in various types of systems -
physical (e.g.,  spontaneous magnetization and crystal growth in classical physics),
biological (e.g., flocking and swarming, \cite{Camazine, Vicsek-survey}) or socio-economical \cite{Krugman, Naldi-Pareschi-Toscani}.
The field of collective (swarm) intelligence also found many applications in engineering and robotics
\cite{Hamman, Jadbabaie}. The newest developments in the mathematical approaches to the field are captured in, e.g.,
\cite{ActiveParticles, Pareschi-Toscani-survey, ChHaLi, Ma,PiTr, PiVa18, PiVa, Ha1, Kalise, DHK19, LiuLi}.

The Cucker-Smale model is a
prototypical model of consensus seeking,
or, in physical context, velocity alignment.
Introduced in  \cite{CuSm1, CuSm2}, it has been extensively
studied in many variants, where the main point of interest
is the asymptotic convergence of the (generalized) velocities
towards a consensus value.
In this paper we focus on a variant of the Cucker-Smale model with distributed delay.
We consider $N \in\mathbb{N}$ autonomous agents described by their phase-space
coordinates $(x_i(t), v_i(t))\in\mathbb{R}^{2d}$, $i=1,2,\cdots,N$,
$t\geq 0$, where $x_i(t) \in \mathbb{R}^{d}$, resp. $v_i(t) \in
\mathbb{R}^{d}$, are time-dependent position, resp. velocity,
vectors of the $i$-th agent,
and $d\geq 1$ is the physical space dimension.
The agents are subject to the following dynamics
\(
      \dot{x}_{i} &=& v_{i} \label{CS_delay1} \\
      \dot{v}_{i} &=& \frac{\lambda}{N}\sum\limits_{j=1}^N
       \int_0^\infty   \psi(|{x}_{i}(t-s)-{x}_{j}(t-s)|)
      ({v}_{j}(t-s)-{v}_{i}(t-s)) \d\P(s), \label{CS_delay2}
\)
for $i=1,2,\cdots,N$, where $|\cdot|$ denotes the Euclidean distance in $\mathbb{R}^d$.
The parameter $\lambda>0$ is fixed
and $\P$ is a probability measure on $[0,\infty)$.
For simplicity we consider constant initial datum on $(-\infty,0]$
for the position and velocity trajectories,
\( \label{CS_delay_IC}
     (x_{i}(t),v_{i}(t)) \equiv (x_{i}^0,v_{i}^0) \qquad \mbox{for } t \in (-\infty,0],
\)
with $(x_{i}^0, v_{i}^0) \in \mathbb{R}^{d}\times \mathbb{R}^{d}$
for $i=1,2,\cdots,N$.
The function $\psi: [0,\infty) \to (0,\infty)$ is a
positive nonincreasing differentiable function that models the communication rate
between two agents $i$, $j$, in dependence of their 
metric distance. For notational convenience, we shall denote
\[
   \psi_{ij}(t):=\psi(|x_{i}(t)-x_{j}(t)|).
\]
In our paper we shall introduce the following three assumptions on $\psi=\psi(r)$,
namely, that
\( \label{ass:psi0}
   \psi(r) \leq 1 \qquad\mbox{for all } r\geq 0,
\)
which clearly does not restrict the generality due to the freedom to choose
the value of the parameter $\lambda>0$.
Moreover, we assume that there exist some $\gamma<1$ and $c, R>0$ such that
\( \label{ass:psi1}
   \psi(r) \geq c r^{-1+\gamma} \qquad\mbox{for all } r\geq R,
\)
and that there exists $\alpha>0$ such that
\( \label{ass:psi2}
   \psi'(r) \geq -\alpha \psi(r) \qquad\mbox{for all } r > 0.
\)
The prototype rate considered by Cucker and Smale in \cite{CuSm1, CuSm2}
and many subsequent papers is of the form
\begin{equation}
\label{CS_cw}
   \psi(r)=\frac{1}{(1+r^2)^{\beta}},
\end{equation}
with the exponent $\beta\geq 0$.
The assumption \eqref{ass:psi1} is verified for \eqref{CS_cw} if $\beta<1/2$,
while assumption \eqref{ass:psi2} is satisfied for all $\beta\geq 0$ by choosing $\alpha:=2\beta$.
Let us point out that the results of our paper are not restricted to the particular form \eqref{CS_cw}
of the communication rate.

In real systems of interacting agents - animals, humans or robots,
the agents typically react to the information perceived from their surroundings with positive
processing or reaction delay, which might have a significant effect
on their collective behavior.
The system \eqref{CS_delay1}--\eqref{CS_delay2} represents a model
for flocking or consensus dynamics where the reaction (or information processing)
delay is distributed in time according to the probability distribution $\P$.
The main objective in the study of Cucker-Smale type models is their
asymptotic behavior, in particular, the concept of \emph{conditional or unconditional
flocking}. In agreement with \cite{CuSm1, CuSm2}
and many subsequent papers,
we say that the system exhibits flocking behavior if there is
asymptotic alignment of velocities and the particle group stays
uniformly bounded in time.

\begin{definition} \label{def:flocking}
We say that the particle system \eqref{CS_delay1}--\eqref{CS_delay2}
exhibits \emph{flocking} if its solution $(x(t),v(t))$ satisfies
\begin{equation*}
   \lim_{t \to \infty} |v_i-v_j| =0, \qquad
   \sup_{t \geq 0} |x_i-x_j| < \infty,
\end{equation*}
for all $i,j = 1,2,\cdots,N$.
\end{definition}

The term \emph{unconditional flocking} refers to
the case when flocking behavior takes place for all initial
conditions, independently of the value of the parameters $\lambda>0$
and $N\in\mathbb{N}$. The celebrated result of Cucker and Smale
\cite{CuSm1, CuSm2} states that the system
\eqref{CS_delay1}--\eqref{CS_delay2} without delay
(this corresponds to the formal choice $\d \P(s):=\delta(s) \d s$,
with $\delta$ the Dirac delta measure)
with the communication rate \eqref{CS_cw} exhibits unconditional
flocking if and only if $\beta < 1/2$. For $\beta \geq 1/2$ the
asymptotic behavior depends on the initial configuration and the
particular value of the parameters $\lambda>0$ and $N\in\mathbb{N}$.
In this case we speak about \emph{conditional flocking}. The proof
of Cucker and Smale (and its subsequent variants, see \cite{HaTa, HaLiu,
CaFoRoTo}) is based on a bootstrapping argument, estimating, in
turn, the quadratic fluctuations of positions and velocities,
and showing that the velocity fluctuations decay
monotonically to zero as $t\to\infty$.

The presence of delays in \eqref{CS_delay1}--\eqref{CS_delay2}
introduces a major analytical difficulty.
In contrast to the classical Cucker-Smale system (without delay),
the quadratic velocity fluctuations are, in general, not decaying
in time, and oscillations may appear.
In fact, oscillations are a very typical phenomenon exhibited by solutions
of differential equations or systems with delay, see, e.g., \cite{Gyori-Ladas}.
In \cite{HasMar} we developed an analytical approach
for the Cucker-Smale model with lumped delay
(corresponds to the formal choice $\d \P(s):=\delta(s-\tau) \d s$ in \eqref{CS_delay1}--\eqref{CS_delay2},
with a fixed $\tau>0$).
It is based on the following two-step procedure: first, construction of a Lyapunov functional,
which provides global boundedness of the quadratic velocity fluctuations.
Second, forward-backward estimates on appropriate quantities
that give exponential decay of the velocity fluctuations.
The main goal of this paper is to generalize the approach
to the case of distributed delays with a general probability measure $P$.
A demonstration of the approach to the scalar negative feedback
equation with distributed delay, which can be seen as a special case
of \eqref{CS_delay1}--\eqref{CS_delay2} with $\psi\equiv 1$ and $N=2$, was recently
given in \cite{Has}.


Flocking in Cucker-Smale type models with fixed lumped delay and renormalized
communication weights was recently studied in \cite{LiWu, ChHa1}.
Both these papers consider the case where the delay in the
velocity equation for the $i$-th agent is present only in the
$v_j$-terms for $j\neq i$.
This allows for using convexity arguments
to conclude a-priori uniform boundedness of the velocities.
Such convexity arguments are not available for our
system \eqref{CS_delay1}--\eqref{CS_delay2}.
In \cite{ChHa2} the method is extended to the mean-field limit ($N\to\infty$) of the model.
In \cite{ChLi} the authors consider heterogeneous delays both in the
$x_j$ and $v_j$ terms and they prove asymptotic flocking for small
delays and the communication rate \eqref{CS_cw}. A system with time-varying
delays was studied in \cite{PiTr}, under the a-priori assumption
that the Fiedler number (smallest positive eigenvalue) of the
communication matrix $(\psi_{ij})_{i,j=1}^N$ is uniformly bounded
away from zero. The same assumption is made in \cite{ErHaSu}
for a Cucker-Smale type system with delay and multiplicative noise.
The validity of this relatively strong assumption
would typically be guaranteed by making the communication rates
$\psi_{ij}$ a-priori bounded away from zero, which excludes the
generic choice \eqref{CS_cw} for $\psi$. Our approach does not require
such a-priori boundedness.

Cucker-Smale systems with distributed delays were
studied in \cite{LiuLi} and \cite{PiVa}.
In both works, the delay is present in the expression for $v_j$ only,
while $v_i$ in \eqref{CS_delay2} is evaluated at the present time $v_i(t)$.
The $L^\infty$ analysis in \cite{PiVa} is based on a system of dissipative
differential inequalities for the position and velocity diameters,
leading to a nonexplicit ``threshold on the time delay''.
The work \cite{PiVa} introduces hierarchical leadership
to the distributed delay system. For the case of free will
ultimate leader (i.e., it can change its velocity freely),
a flocking result is given under a smallness condition on the leader's acceleration.
To our best knowledge, the Cucker-Smale system of the form \eqref{CS_delay1}--\eqref{CS_delay2},
where distributed delay is present in both the $v_j$ and $v_i$ terms on the right-hand
side \eqref{CS_delay2}, has not been studied before.

This paper is organized as follows. In Section \ref{sec:main} we formulate our assumptions
and the main flocking result. In Section \ref{sec:flocking} we provide its proof
divided into three steps - uniform bound on the velocities by a Lyapunov functional,
forward-backward estimates, and exponential decay of the velocity fluctuation.
Finally, in Section \ref{sec:examples} we demonstrate the applicability
of our theory to particular delay distributions - exponential,
uniform and linear. For the exponential distribution,
the flocking conditions can be resolved analytically,
leading to an explicit formula. For the other two distributions,
the satisfiability of the assumptions is tested numerically.

\section{Main result}\label{sec:main}
Let us first introduce several relevant quantities.
For $t\in\R$ we define the quadratic fluctuation of the velocities,
\(  \label{def:V}
   V(t):=\frac{1}{2}\sum_{i=1}^N \sum_{j=1}^N |v_{i}(t)-v_{j}(t)|^{2}
\)
and the quantity
\(  \label{def:D}
   D(t) := \frac12 \sum_{i=1}^N \sum_{j=1}^N \int_0^\infty {\psi}_{ij}(t-s) |{v}_{j}(t-s) - {v}_{i}(t-s)|^{2} \d\P(s).
\)
Moreover, we introduce the moments of the probability measure $\P$.
The $k$-th order moment for $k\in\N$ shall be denoted $\M_k$,
\[
   \M_k := \int_0^\infty s^k \,\d\P(s),
\]
the exponential moment (or moment generating function) $\Mexp[\kappa]$ for $\kappa\in\R$,
\[
   \Mexp[\kappa] := \int_0^\infty e^{\kappa s} \,\d\P(s).
\]
Finally, we shall need the moment $\K[\kappa]$, defined as
\(  \label{def:K}
   \K[\kappa] := \int_0^\infty s  \frac{e^{\kappa s}-1}{\kappa} \,\d\P(s).
\)
We also introduce the quantity $\L(0)$, which depends on the (constant) initial datum
and the second- and third-order moments of $\P$,
\(  \label{def:L0}
   \L(0) := V(0) + \frac{2\lambda^2 \M_3}{\sqrt{\M_2}} D(0).
\)

Our main result is the following:

\begin{theorem} \label{thm:main}
Let the communication rate $\psi=\psi(r)$ verify the assumptions \eqref{ass:psi0}--\eqref{ass:psi2}.
Let the parameter $\lambda>0$ and the probability measure $\P$ be such that
\(  \label{ass:M}
   2 \lambda \sqrt{\M_2} \leq 1.
\)
If there exists $\kappa>0$ such that the conditions
\(  \label{ass:K}
   2\lambda\sqrt{\K[\kappa]} < 1
\)
and
\(  \label{ass:Mexp}
   4\lambda\sqrt{\Mexp[\kappa]} + \alpha\sqrt{2\L(0)} < \kappa
\)
are mutually satisfied, with $\alpha>0$ given by \eqref{ass:psi2},
then the solution of the system \eqref{CS_delay1}--\eqref{CS_delay2}
subject to constant initial datum \eqref{CS_delay_IC}
exhibits flocking behavior in the sense of Definition \ref{def:flocking}.
Moreover, the quadratic velocity fluctuation $V=V(t)$ decays monotonically
and exponentially to zero as $t\to\infty$.
\end{theorem}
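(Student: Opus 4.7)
The plan is to follow the three-step strategy outlined in the introduction. First, construct a non-increasing Lyapunov functional $\L(t)$, reducing at $t=0$ to $\L(0)$ from \eqref{def:L0}, which dominates $V(t)$ and hence yields the a priori bound $V(t)\le \L(0)$ for all $t\ge 0$. Second, derive forward-backward estimates that compare $V(t)$ with weighted integrals of $V(t-s)$ and with $D(t)$. Third, combine these into a differential inequality for $e^{2\kappa t}V(t)$ whose right-hand side is non-positive under \eqref{ass:M}, \eqref{ass:K} and \eqref{ass:Mexp}; this gives the exponential decay $V(t)\le \L(0)e^{-2\kappa t}$ and, after integration, the uniform position bound in Definition~\ref{def:flocking}. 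The three conditions of the theorem are earmarked for distinct points of the argument: \eqref{ass:M} closes the Lyapunov estimate, \eqref{ass:K} balances the memory contribution, and \eqref{ass:Mexp} fixes the exponential rate against the damping constant $\alpha$ from \eqref{ass:psi2}.

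\textbf{Step 1.} Differentiating \eqref{def:V} along \eqref{CS_delay2} and exploiting the symmetry $\psi_{ij}=\psi_{ji}$ leads to
\begin{equation*}
\dot V(t) = -\lambda\sum_{i,j=1}^N \int_0^\infty \psi_{ij}(t-s)\,(v_i(t)-v_j(t))\cdot(v_i(t-s)-v_j(t-s))\,\d\P(s).
\end{equation*}
Writing $v_i(t)-v_j(t) = [v_i(t-s)-v_j(t-s)] + \int_{t-s}^t(\dot v_i-\dot v_j)\,\d\sigma$ isolates the dissipative principal term $-2\lambda D(t)$; the remainder is controlled by Cauchy-Schwarz and Young's inequality in terms of $D(t)$ and a memory integral of $|\dot v_i-\dot v_j|^2$. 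Using \eqref{CS_delay2} together with $\psi\le 1$ from \eqref{ass:psi0} to bound $|\dot v_i-\dot v_j|^2$ by a constant times $D$, and Fubini to extract $\M_2$, one recognises that adding a suitable memory integral to $V(t)$ produces a functional $\L(t)$ satisfying $\dot\L(t)\le 0$ exactly when $2\lambda\sqrt{\M_2}\le 1$, i.e.\ under \eqref{ass:M}. This yields $V(t)\le \L(0)$ globally in time.

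\textbf{Steps 2 and 3.} With the uniform bound in hand, forward-backward estimates of the type $|v_i(t)-v_j(t)|^2 \le C\int_0^\infty |v_i(t-s)-v_j(t-s)|^2\,\d\P(s) + (\text{small})$ follow by expressing $v_i(t)-v_j(t)$ as its $s$-delayed value plus an integral of $\dot v_i-\dot v_j$ and applying Cauchy-Schwarz with respect to $\d\P$. To pass to exponential decay I would multiply the resulting differential inequality for $V$ by $e^{2\kappa t}$ and close the bootstrap. Assumption \eqref{ass:psi2} enters in the form $\psi_{ij}(t-s)\ge \psi_{ij}(t)\exp(-\alpha s\sqrt{2\L(0)})$, where the estimate $|x_i(\sigma)-x_j(\sigma)-x_i(t)+x_j(t)|\le s\sqrt{2\L(0)}$ on $\sigma\in[t-s,t]$ relies on Step~1; this accounts for the additive $\alpha\sqrt{2\L(0)}$ in \eqref{ass:Mexp}. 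The moment $\K[\kappa]$ emerges after swapping the order of integration in the memory terms of type $\int_0^\infty e^{\kappa s}\int_{t-s}^t e^{-\kappa\sigma}(\cdot)\,\d\sigma\,\d\P(s)$, while $\Mexp[\kappa]$ appears from replacing the ansatz $V(t-s)\le \L(0)e^{-2\kappa(t-s)}=e^{2\kappa s}\cdot e^{-2\kappa t}\L(0)$ inside the memory terms. The strict inequalities \eqref{ass:K} and \eqref{ass:Mexp} are precisely the conditions that render the resulting expression for $(e^{2\kappa t}V(t))'$ non-positive. The uniform position bound is then obtained by integrating $|\dot x_i-\dot x_j|\le \sqrt{2V(\sigma)}\le \sqrt{2\L(0)}\,e^{-\kappa\sigma}$.

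The main obstacle is the symmetric occurrence of the delay in both $v_i$ and $v_j$ on the right-hand side of \eqref{CS_delay2}: unlike the settings of \cite{LiWu, ChHa1, LiuLi, PiVa}, no convexity argument provides an a priori velocity bound, so Step~1 must \emph{create} this bound via a Lyapunov functional rather than use it as input. The delicate point is to choose the memory weight in $\L$ so that the closure of Step~1 under the sharp constraint \eqref{ass:M} is compatible with the closure of Step~3 under \eqref{ass:K}; these two constraints pull in opposite directions, and the admissible parameter window depends sensitively on the interplay between $\M_2$, $\K[\kappa]$ and $\Mexp[\kappa]$. Once this balance is achieved, the strict inequality \eqref{ass:Mexp} provides enough slack to absorb the $\alpha\sqrt{2\L(0)}$ correction coming from the lower bound on $\psi$ at past times, and the bootstrap closes.
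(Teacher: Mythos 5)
Your Step 1 matches the paper: the same Lyapunov functional with a nested memory integral, closed by the choice $\delta=\lambda\sqrt{\M_2}$ under \eqref{ass:M}, giving $V(t)\le\L(0)$. The gaps are in Steps 2--3, and they are genuine rather than cosmetic.

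First, your argument is circular at the point where exponential decay is claimed. To convert the dissipation term $D(t)$ into a multiple of $V(t)$ you need a lower bound on the communication rates $\psi_{ij}$, which requires a uniform bound on the position diameter $d_X(t)$; but you propose to obtain the position bound \emph{after} the decay $V(t)\le\L(0)e^{-2\kappa t}$, by integrating $\sqrt{2V}$. Without a prior bound on $d_X$, the only available estimate is $d_X(t)\le d_X(0)+\sqrt{2\L(0)}\,t$, so $\psi$ along the solution may tend to zero and no exponential rate can be extracted. The paper breaks this circularity using assumption \eqref{ass:psi1}, which your proposal never invokes: under \eqref{ass:K} one first shows only that $V$ is nonincreasing, then uses $\psi(r)\ge cr^{-1+\gamma}$ together with the linear growth of $d_X$ to get the stretched-exponential bound $V(t)\lesssim\exp(-\omega t^{\gamma})$; this is integrable, so $d_X$ is in fact uniformly bounded, $\psi$ is bounded below by $\psi(\bar d_X)$, and only then does genuine exponential decay follow --- with rate $\omega\,\psi(\bar d_X)$, not $2\kappa$. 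Assumption \eqref{ass:psi1} is not optional decoration; without it the theorem's conclusion fails for rapidly decaying $\psi$.

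Second, you misidentify the object and the mechanism of the forward-backward estimate. In the paper, $\kappa$ is not a decay rate: the key lemma establishes the two-sided, non-decaying comparison $e^{-\kappa s}D(t)<D(t-s)<e^{\kappa s}D(t)$ for the quantity $D$ (not $V$, and not an ansatz $V(t-s)\le e^{2\kappa s}e^{-2\kappa t}\L(0)$). This is proved by a continuity/contradiction argument applied to the differential inequality $|\dot D|\le(2\eps+\alpha\sqrt{2\L(0)})D+\tfrac{2\lambda^2}{\eps}\int_0^\infty D(t-s)\,\d\P(s)$, and condition \eqref{ass:Mexp} is precisely $\kappa>C_1+C_2\Mexp[\kappa]$ after optimizing $\eps=\lambda\sqrt{\Mexp[\kappa]}$ --- that is where the coefficient $4\lambda\sqrt{\Mexp[\kappa]}$ comes from. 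The purpose of this estimate is only to bound the memory term $\int_{t-s}^tD(\sigma)\,\d\sigma<D(t)\frac{e^{\kappa s}-1}{\kappa}$ in the inequality of Step 1, which after optimizing $\delta=\lambda\sqrt{\K[\kappa]}$ yields $\dot V\le 2\lambda(2\lambda\sqrt{\K[\kappa]}-1)D(t)$ and hence monotonicity of $V$ under \eqref{ass:K}. Substituting a decay ansatz for $V$ into the memory terms, as you propose, cannot be bootstrapped because $V$ does not decay at rate $2\kappa$ and, as above, no lower bound on $\psi$ is yet available to close such a Gr\"onwall argument.
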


The above theorem deserves several comments.
First, the conditions \eqref{ass:M}--\eqref{ass:Mexp} relate the value
of the parameter $\lambda>0$, the moments of the probability measure $\P$
and the fluctuation of the initial datum through $\L(0)$.
As we shall demonstrate in Section \ref{sec:examples},
for particular choices of the distribution $\P$
they lead to systems of nonlinear inequalities in terms
of the distribution parameters and the fluctuation of the initial datum.
These can be sometimes resolved analytically,
leading to explicit flocking conditions. This is the case
for the exponential distribution, as we shall demonstrate in Section \ref{subsec:exponential}.
However, even if the nonlinear inequalities turn out to be prohibitively
complex to be treated analytically, they are well
approachable numerically. We show this for the uniform and linear distributions
in Sections \ref{subsec:uniform} and \ref{subsec:linear}.

For fixed $\lambda>0$ and $\P$, the expression \eqref{ass:Mexp}
can be interpreted as a smallness condition on the fluctuation of the initial datum
mediated through the value of $\L(0)$.
In fact, at the price of the flocking condition getting slightly more restrictive, the term $\L(0)$ in \eqref{def:L0}
can be replaced by a more intuitive expression.
Indeed, with the bound $\psi\leq 1$ given by \eqref{ass:psi0}
and the constantness of the initial datum, we have
\[
   D(0) \leq \frac12 \sum_{i=1}^N \sum_{j=1}^N \int_0^\infty |{v}_{j}(-s) - {v}_{i}(-s)|^{2} \d\P(s) = V(0).
\]
Consequently, we have
\[
   \L(0) \leq \left( 1 + \frac{2\lambda^2 \M_3}{\sqrt{\M_2}} \right) V(0)
\]
and \eqref{ass:Mexp} is satisfied whenever
\(  \label{ass:Mexpw}
   4\lambda\sqrt{\Mexp[\kappa]} + \alpha\sqrt{2\left( 1 + \frac{2\lambda^2 \M_3}{\sqrt{\M_2}} \right) V(0)} < \kappa.
\)
We find this expression more appealing since it illustrates the necessity of
smallness of the initial velocity fluctuation $V(0)$ as a condition for asymptotic flocking.

We admit that the assumption about the constantness of the initial datum on $(-\infty,0]$,
or on the interval corresponding to the support of the measure $P$, can be perceived as
too restrictive. In fact, the methods we present in this paper can be generalized to
the case of nonconstant initial data, as we demonstrated in \cite{HasMar}.
However, since this would significantly increase the technicality of our exposition,
we elect to focus on the essence of the method and thus
restrict ourselves to the constant initial datum.

We note that by the rescaling of time $t\mapsto \lambda t$, of the velocities $v_i \mapsto \lambda^{-1} v_i$
and of the probability measure $P$, the parameter $\lambda$ is eliminated from
the system \eqref{CS_delay1}--\eqref{CS_delay2}. Nonetheless,
for the purpose of compatibility with previous literature,
we shall carry out our analysis for the original form \eqref{CS_delay1}--\eqref{CS_delay2}.
The scaling invariance shall become evident in Section \ref{sec:examples},
where we shall formulate the flocking conditions in terms
of properly rescaled parameters of the probability distribution $P$
and in terms of $V(0)/\lambda^2$.

Finally, we note that the symmetry of the particle interactions $\psi_{ij} = \psi_{ji}$
implies that the total momentum is conserved along the solutions of
\eqref{CS_delay2}, i.e.,
\(  \label{mom_cons}
   \sum_{i=1}^N v_i(t) = \sum_{i=1}^N v_i(0) \qquad\mbox{for all } t\geq 0.
\)
Consequently, if the solution converges to an asymptotic velocity consensus,
then its value is determined by the mean velocity of the initial datum.

\section{Asymptotic flocking}\label{sec:flocking}

The proof of asymptotic flocking for the system \eqref{CS_delay1}--\eqref{CS_delay2}
will be carried out in three steps: First, in Section \ref{subsec:VflucB}
we shall derive an uniform bound on the quadratic velocity fluctuation
$V=V(t)$ by constructing a suitable Lyapunov functional.
Then, in Section \ref{FbEst} we prove a forward-backward estimate
on the quantity $D=D(t)$ defined in \eqref{def:D}, which states
that $D=D(t)$ changes at most exponentially locally in time.
Finally, in Section \ref{subsec:EstV} we prove the asymptotic decay
of the quadratic velocity fluctuation and boundedness of the spatial
fluctuation and so conclude the proof of Theorem \ref{thm:main}.


\subsection{Lyapunov functional and uniform bound on velocity fluctuations}\label{subsec:VflucB}

We first derive an estimate on the dissipation of the quadratic velocity fluctuation
in terms of the quantity $D=D(t)$ defined in \eqref{def:D}.

\begin{lemma} \label{lem:dV}
For any $\delta>0$ we have,
along the solutions of \eqref{CS_delay1}--\eqref{CS_delay2},
\( \label{dVest}
   \tot{}{t} V(t) \leq 2(\delta - 1)\lambda {D}(t)
      + \frac{2\lambda^3}{\delta} \int_0^\infty s \int_{t-s}^{t} {D}(\sigma) \,\d\sigma\d\P(s).
\)
\end{lemma}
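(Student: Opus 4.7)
}
The plan is to differentiate $V(t)$, then use the fundamental identity
\[
v_i(t)-v_j(t) = \bigl(v_i(t-s)-v_j(t-s)\bigr) + \int_{t-s}^{t}\bigl(\dot v_i(\sigma)-\dot v_j(\sigma)\bigr)\,\d\sigma,
\]
(valid thanks to the constantness of the initial datum on $(-\infty,0]$, which makes $\dot v_i\equiv 0$ on $(-\infty,0)$) to split the derivative into a \emph{leading dissipative term} and a \emph{delay error term}. Concretely, differentiating \eqref{def:V} and using the antisymmetry in $i,j$ yields $\tot{}{t}V = 2\sum_{i,j}(v_i(t)-v_j(t))\cdot \dot v_i(t)$. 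Substituting \eqref{CS_delay2} for $\dot v_i$ and decomposing $v_i(t)-v_j(t)$ as above produces $\tot{}{t}V = I_1 + I_2$.

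For $I_1$, in which every factor is evaluated at the delayed time $t-s$, a standard Cucker--Smale-type symmetrization in $(i,k)$ (exchanging the indices and using $\psi_{ik}=\psi_{ki}$) collapses the triple sum to $I_1 = -2\lambda D(t)$, exactly as in the undelayed case but evaluated under the probability measure $\P$. This is routine and should not present obstacles.

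For the remainder
\[
   I_2 = \frac{2\lambda}{N}\sum_{i,j,k}\int_0^\infty \psi_{ik}(t-s)\bigl(v_k(t-s)-v_i(t-s)\bigr)\cdot\int_{t-s}^{t}\bigl(\dot v_i(\sigma)-\dot v_j(\sigma)\bigr)\d\sigma\,\d\P(s),
\]
the key simplification is that the integrand depends on $j$ only through $\dot v_j(\sigma)$, and the momentum conservation identity \eqref{mom_cons} (valid on $[0,\infty)$ and trivially on $(-\infty,0]$ by constantness of the data) gives $\sum_j \dot v_j(\sigma)=0$, hence $\sum_j (\dot v_i(\sigma)-\dot v_j(\sigma)) = N\dot v_i(\sigma)$. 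This saves a factor of $2$ and eliminates the index $j$, leaving
\[
   I_2 = 2\lambda \sum_{i,k}\int_0^\infty \psi_{ik}(t-s)\bigl(v_k(t-s)-v_i(t-s)\bigr)\cdot \int_{t-s}^{t}\dot v_i(\sigma)\,\d\sigma\,\d\P(s).
\]

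The final step is to bound $|I_2|$ by Cauchy--Schwarz in $\sigma$, namely $|\int_{t-s}^t \dot v_i(\sigma)\d\sigma|\le \sqrt{s}\bigl(\int_{t-s}^t |\dot v_i(\sigma)|^2\d\sigma\bigr)^{1/2}$, followed by Young's inequality with weight $\delta$: the $b^2$-piece yields $\frac{\lambda}{\delta}\sum_{i,k}\int_0^\infty \psi_{ik}^2|v_k-v_i|^2\d\P$, which is controlled by $2\delta\lambda D(t)$ via $\psi_{ik}^2\le \psi_{ik}$ from \eqref{ass:psi0}; the $a^2$-piece yields $\lambda\delta^{-1}\int_0^\infty s\int_{t-s}^t\sum_i N|\dot v_i(\sigma)|^2\d\sigma\d\P(s)$. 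Here the main technical step is a Cauchy--Schwarz bound applied to \eqref{CS_delay2} (twice: once over $k$, once against the probability measure $\d\P$, combined with $\psi\le 1$) to obtain $\sum_i |\dot v_i(\sigma)|^2 \le \frac{2\lambda^2}{N} D(\sigma)$. Plugging this in yields exactly $\frac{2\lambda^3}{\delta}\int_0^\infty s\int_{t-s}^t D(\sigma)\d\sigma\d\P(s)$, and combining with $I_1$ gives the claimed inequality. The only delicate point is to correctly exploit momentum conservation in $I_2$ to avoid an extra factor of $2$; everything else is careful bookkeeping of Cauchy--Schwarz and Young.
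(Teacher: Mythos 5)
Your proof is correct and takes essentially the same route as the paper: decompose the velocity at the delayed time $t-s$, symmetrize the leading term into $-2\lambda D(t)$, then handle the remainder with Young's inequality (weight $\delta$) and a double Cauchy--Schwarz giving $\sum_i |\dot v_i(\sigma)|^2 \le \frac{2\lambda^2}{N} D(\sigma)$; the paper merely invokes momentum conservation at the very start to reduce to $2N\sum_i\langle v_i,\dot v_i\rangle$ instead of inside the error term, which changes nothing substantive. The only slip is cosmetic: in your Young step the weights $\delta$ and $\delta^{-1}$ are attached to the wrong pieces in the prose (the $\psi_{ik}^2|v_k-v_i|^2$ piece must carry the weight $\delta$ to be controlled by $2\delta\lambda D(t)$), but the final coefficients you state are the correct ones and reproduce \eqref{dVest} exactly.
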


\begin{proof}
We have
\[
   \tot{}{t} V(t)
   =
   \sum_{i=1}^N\sum_{j=1}^N \langle v_i-v_j, \dot v_i - \dot v_j \bigr\rangle
   =
   2N \sum_{i=1}^N \langle v_i, \dot v_i \bigr\rangle -
   2\sum_{i=1}^N\sum_{j=1}^N \langle v_i, \dot v_j \bigr\rangle
   =
   2N \sum_{i=1}^N \langle v_i, \dot v_i \bigr\rangle,
\]
where the last equality is due to the conservation of momentum \eqref{mom_cons}.
With \eqref{CS_delay2} we have
\begin{align*}
   \nonumber
   \tot{}{t} V(t) &=
   2\lambda \sum_{i=1}^N \sum_{j=1}^N \int_0^\infty \psi_{ij}(t-s) \bigl\langle  v_{i}(t),{v}_{j}(t-s)-{v}_{i}(t-s)\bigr\rangle \d\P(s) \\
   \nonumber
   &=
    2\lambda \sum_{i=1}^N \sum_{j=1}^N \int_0^\infty {\psi}_{ij}(t-s) \bigl\langle
   {v}_{i}(t-s), {v}_{j}(t-s)-{v}_{i}(t-s)\bigr\rangle \d\P(s)  \\
   &\qquad
     - 2\lambda \sum_{i=1}^N \sum_{j=1}^N \int_0^\infty {\psi}_{ij}(t-s) \bigl\langle
   {v}_{i}(t-s)-v_{i}(t), {v}_{j}(t-s)-{v}_{i}(t-s) \bigr\rangle \d\P(s).
\end{align*}
For the first term of the right-hand side we apply the standard
symmetrization trick (exchange of summation indices $i
\leftrightarrow j$, noting the symmetry of ${\psi}_{ij} ={\psi}_{ji}$),
\begin{align*}
   2\lambda \sum_{i=1}^N \sum_{j=1}^N \int_0^\infty {\psi}_{ij}(t-s) \bigl\langle
   {v}_{i}(t-s), {v}_{j}(t-s)-{v}_{i}(t-s) \bigr\rangle \d\P(s) \\
   = - \lambda \sum_{i=1}^N \sum_{j=1}^N \int_0^\infty {\psi}_{ij}(t-s) \bigl|{v}_{j}(t-s) - {v}_{i}(t-s)\bigr|^{2} \d\P(s).
\end{align*}
Therefore, we arrive at
\begin{align*}
   \tot{}{t} V(t) =
   - 2\lambda D(t)
    - 2\lambda \sum_{i=1}^N \sum_{j=1}^N \int_0^\infty {\psi}_{ij}(t-s) \bigl\langle {v}_{i}(t-s) -
   v_{i}(t), {v}_{j}(t-s) - {v}_{i}(t-s) \bigr\rangle \d P(s).
\end{align*}
For the last term we use the Young inequality with $\delta>0$ and the bound $\psi\leq 1$ by assumption \eqref{ass:psi0},
\begin{align*}
   & \left|
     2\lambda \sum_{i=1}^N \sum_{j=1}^N \int_0^\infty {\psi}_{ij}(t-s) \bigl\langle {v}_{i}(t-s) - v_{i}(t), {v}_{j}(t-s)-{v}_{i}(t-s) \bigr\rangle \d\P(s)
   \right| \\
     & \qquad \leq
   \lambda\delta \sum_{i=1}^N \sum_{j=1}^N \int_0^\infty {\psi}_{ij}(t-s) \bigl| {v}_{j}(t-s) -{v}_{i}(t-s) \bigr|^{2} \d\P(s)
   + \frac{N\lambda}{\delta} \sum_{i=1}^N \int_0^\infty  \bigl| {v}_{i}(t-s)-v_{i}(t) \bigr|^{2} \d\P(s).
\end{align*}
Hence,
\begin{align}
   \tot{}{t}V(t) \leq
     2 (\delta-1) \lambda D(t) +
       \frac{N\lambda}{\delta} \sum_{i=1}^N \int_0^\infty  \bigl| {v}_{i}(t-s)-v_{i}(t) \bigr|^{2} \d\P(s)
    \label{ke_diss}
\end{align}
Next, we use \eqref{CS_delay2} to evaluate the difference
$v_{i}(t)-{v}_{i}(t-s)$,
\begin{align} \label{vel1_del}
   v_{i}(t)-{v}_{i}(t-s) &= \int_{t-s}^{t} \tot{}{\sigma} v_{i}(\sigma)\, \d \sigma \\
    & = \frac{\lambda}{N}\sum_{j=1}^N \int_{[t-s]^+}^{t} \int_0^\infty
      {\psi}_{ij}(\sigma-\eta)({v}_{j}(\sigma-\eta)-{v}_{i}(\sigma-\eta))\, \d\P(\eta) \d \sigma,
      \nonumber
\end{align}
where $[t-s]^+ := \max\{0,t-s\}$ and we used the fact that the initial datum for
the velocity trajectories is constant.
Taking the square in \eqref{vel1_del}
and summing over $i$ we have
\begin{align} \label{vel2_del}
   \nonumber
   \sum\limits_{i=1}^N |{v}_{i}(t)-v_{i}(t-s)|^{2}
   &=\frac{\lambda^2}{N^2} \sum_{i=1}^N \left| \sum_{j=1}^N
   \int_{[t-s]^+}^{t} \int_0^\infty  {\psi}_{ij}(\sigma-\eta)({v}_{j}(\sigma-\eta)-{v}_{i}(\sigma-\eta))\, \d\P(\eta) \d\sigma
     \right|^2
   \\ \nonumber
   &\leq \frac{\lambda^2}{N}\sum_{i=1}^N\sum_{j=1}^N \left|
   \int_{[t-s]^+}^{t} \int_0^\infty  {\psi}_{ij}(\sigma-\eta)({v}_{j}(\sigma-\eta)-{v}_{i}(\sigma-\eta))\, \d\P(\eta) \d\sigma
      \right|^{2}
   \\ \nonumber
   &\leq \frac{s \lambda^2}{N}\sum_{i=1}^N\sum_{j=1}^N
     \int_{[t-s]^+}^{t} \int_0^\infty  {\psi}_{ij}(\sigma-\eta) \bigl| {v}_{j}(\sigma-\eta)-{v}_{i}(\sigma-\eta)) \bigr|^2 \, \d\P(\eta) \d\sigma
     \\
   & \leq \frac{2 s \lambda^2}{N} \int_{t-s}^{t} {D}(\sigma)\,\d\sigma.
\end{align}
The first inequality in \eqref{vel2_del} is
Cauchy-Schwartz for the sum term, i.e. $\left|\sum\limits_{i=1}^N
a_{i}\right|^2 \leq N\sum\limits_{i=1}^N |a_{i}|^2$, and the second
Cauchy-Schwartz inequality for the integral term,
together with the bound $\psi\leq 1$ imposed by assumption \eqref{ass:psi0}.
Combining \eqref{ke_diss} and \eqref{vel2_del}
directly leads to \eqref{dVest}.
\end{proof}

We now define for $t>0$ the functional
\begin{equation} \label{Lyap}
   \L(t) := V(t) + \frac{2\lambda^2}{\sqrt{\M_2}} \int_0^\infty s \int_{t-s}^{t}\int_{\theta}^{t}  {D}(\sigma) \, \d\sigma \d\theta\d\P(s),
\end{equation}
where $V=V(t)$ is the quadratic velocity fluctuation \eqref{def:V} and $D=D(t)$ is defined in \eqref{def:D}.
Note that for $t=0$ the above formula reduces to the formula \eqref{def:L0} for $\L(0)$
(recall the constantness of the initial datum).

\begin{lemma} \label{lem:Lyap}
Let the parameter $\lambda>0$ satisfy
\( \label{cond_1}
   2\lambda\sqrt{\M_2} \leq 1.
\)
Then along the solutions of \eqref{CS_delay1}--\eqref{CS_delay2}
the functional \eqref{Lyap} satisfies
\[
   \L(t)\leq \L(0)\qquad\mbox{for all } t >0.
\]
\end{lemma}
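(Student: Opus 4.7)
The plan is to show $\frac{\d}{\d t}\L(t) \leq 0$ by computing the derivative of the double-integral term in $\L(t)$ via Leibniz's rule, then combining with the bound on $\tot{}{t}V(t)$ supplied by Lemma \ref{lem:dV} and choosing the free parameter $\delta>0$ so that the remaining terms either cancel exactly or have a non-positive coefficient.

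The first step is the differentiation. For fixed $s\geq 0$ write
\[
   J_s(t) := \int_{t-s}^t \int_\theta^t {D}(\sigma)\,\d\sigma\,\d\theta.
\]
Applying Leibniz's rule carefully, the contribution from the upper limit $\theta=t$ vanishes because $\int_t^t {D}(\sigma)\,\d\sigma=0$; the contribution from the lower limit $\theta=t-s$ yields $-\int_{t-s}^t {D}(\sigma)\,\d\sigma$; and differentiating the inner integral under the outer one gives $sD(t)$. Hence
\[
   \tot{}{t} J_s(t) = s\, {D}(t) - \int_{t-s}^t {D}(\sigma)\,\d\sigma.
\]
Multiplying by $s$, integrating against $\d\P(s)$ and using $\int_0^\infty s^2\,\d\P(s)=\M_2$, this gives
\[
   \tot{}{t} \left[\frac{2\lambda^2}{\sqrt{\M_2}}\int_0^\infty s\, J_s(t)\,\d\P(s)\right]
   = 2\lambda^2\sqrt{\M_2}\,{D}(t) - \frac{2\lambda^2}{\sqrt{\M_2}}\int_0^\infty s\int_{t-s}^t {D}(\sigma)\,\d\sigma\,\d\P(s).
\]

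The second step combines this with Lemma \ref{lem:dV}. Adding the two derivatives, the terms proportional to ${D}(t)$ have coefficient $2(\delta-1)\lambda + 2\lambda^2\sqrt{\M_2}$, and the terms involving the double integral $\int_0^\infty s\int_{t-s}^t{D}(\sigma)\,\d\sigma\,\d\P(s)$ have coefficient $\frac{2\lambda^3}{\delta} - \frac{2\lambda^2}{\sqrt{\M_2}}$. To make both coefficients non-positive the natural choice is $\delta := \lambda\sqrt{\M_2}$: the double-integral coefficient then vanishes exactly, while the ${D}(t)$ coefficient equals $2\lambda(2\lambda\sqrt{\M_2}-1)$, which is non-positive precisely by the hypothesis \eqref{cond_1}. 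This yields $\tot{}{t}\L(t) \leq 0$, and integrating from $0$ to $t$ gives the claim.

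The only mildly delicate point is the Leibniz computation, since $t$ appears in three places (both limits of the outer integral and the upper limit of the inner integral); one must keep track of the signs and check that the ``diagonal'' boundary term vanishes. The choice $\delta = \lambda\sqrt{\M_2}$ is the one that makes the double-integral terms cancel identically; the assumption $2\lambda\sqrt{\M_2}\leq 1$ then becomes exactly the inequality needed to keep the remaining $D(t)$-coefficient non-positive, which is a pleasant consistency check that \eqref{cond_1} is essentially sharp for this Lyapunov argument.
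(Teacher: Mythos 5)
Your proposal is correct and follows essentially the same route as the paper's proof: differentiate the triple-integral term (your Leibniz computation yields exactly $\M_2 D(t) - \int_0^\infty s\int_{t-s}^t D(\sigma)\,\d\sigma\,\d\P(s)$ as in the paper), then choose $\delta:=\lambda\sqrt{\M_2}$ in \eqref{dVest} to cancel the double-integral term and leave the coefficient $2\lambda(2\lambda\sqrt{\M_2}-1)\leq 0$ on $D(t)$. The paper merely states the derivative without spelling out the boundary terms, so your more careful Leibniz bookkeeping is a harmless elaboration of the identical argument.
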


\begin{proof}
The time derivative of the second term in $\L(t)$ yields
\begin{align*} 
   \tot{}{t} \int_0^\infty s \int_{t-s}^{t}\int_{\theta}^{t}  {D}(\sigma) \, \d\sigma \d\theta\d\P(s)
      & = {D}(t) \int_0^\infty s^2 \d\P(s)  -  \int_0^\infty s \int_{t-s}^{t} {D}(\sigma) \, \d\sigma\d\P(s)
      \\
      & = \M_2 {D}(t) -  \int_0^\infty s \int_{t-s}^{t} {D}(\sigma) \, \d\sigma\d\P(s).
\end{align*}
Therefore, with the choice $\delta:=\lambda\sqrt{\M_2}$ in \eqref{dVest}
we eliminate the integral term and obtain
\(  \label{LyapDec}
   \tot{}{t} \L(t) \leq 2\lambda \left( -1 + 2\lambda\sqrt{\M_2} \right) {D}(t).
\)
We observe that the right-hand side is nonpositive if \eqref{cond_1} is verified and conclude.
\end{proof}

Consequently, if \eqref{cond_1} holds, then the velocity fluctuation $V=V(t) \leq \L(t)$ is uniformly
bounded from above by $\L(0)$ for all $t\geq 0$.

\begin{remark}
Having established the decay estimate \eqref{LyapDec}, one might attempt
to apply Barbalat's lemma to prove the desired asymptotic consensus result,
assuming merely the validity of \eqref{cond_1}.
Indeed, with the uniform bound on velocities provided by Lemma \ref{lem:Lyap}
and the properties of the interaction rate $\psi$,
one can prove that the second-order derivative $\frac{\d^2}{\d t^2} \L(t)$ is uniformly bounded in time,
which implies that $ \tot{}{t} \L(t) \to 0$ as $t\to\infty$.
This in turn gives $D(t) \to 0$ as $t\to\infty$. However, since $\psi$ is not a priori bounded from below
(and the uniform velocity bound allows for a linear in time expansion of the group in space),
this does not imply that the velocity fluctuation $V(t)$ decays asymptotically to zero.
\end{remark}

\subsection{Forward-backward estimates}\label{FbEst}

\begin{lemma} \label{lem:D_ineq}
Let the communication rate $\psi=\psi(r)$ satisfy assumption \eqref{ass:psi2}.
Then along the solutions of \eqref{CS_delay1}--\eqref{CS_delay2}, the
quantity $D(t)$ defined by \eqref{def:D} satisfies for any fixed $\eps>0$ the
inequality
\begin{equation} \label{D_ineq}
  \left| \tot{}{t} {D}(t) \right| \leq \left(2\eps + \alpha\sqrt{2\L(0)}\right) D(t) + \frac{2\lambda^2}{\eps} \int_0^\infty D(t-s) \,\d\P(s),
\end{equation}
for all $t>0$, with $\alpha>0$ given in \eqref{ass:psi2}.
\end{lemma}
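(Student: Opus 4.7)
The plan is to differentiate $D(t)$ directly under the integral and split the resulting expression into two parts: one where the time derivative hits the communication weight $\psi_{ij}(t-s)$, and one where it hits the squared velocity difference $|v_i(t-s)-v_j(t-s)|^2$. For $t-s\le 0$ every quantity is constant (by the constant initial datum), so only the region $t-s>0$ contributes to the derivative, and the standard chain rule applies there.

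For the $\psi$-derivative piece, I would use the chain rule and Cauchy--Schwarz to get
\[
   \left|\tot{}{t}\psi_{ij}(t-s)\right| = \left|\psi'(|x_{ij}(t-s)|)\,\frac{\bigl\langle x_{ij}(t-s),v_{ij}(t-s)\bigr\rangle}{|x_{ij}(t-s)|}\right| \le \alpha\psi_{ij}(t-s)|v_{ij}(t-s)|,
\]
(with $x_{ij}:=x_i-x_j$, $v_{ij}:=v_i-v_j$), where the last step uses assumption \eqref{ass:psi2} together with $\psi'\le 0$. The uniform bound $|v_{ij}(\tau)|\le\sqrt{2V(\tau)}\le\sqrt{2\L(0)}$, valid for all $\tau\in\R$ by Lemma~\ref{lem:Lyap} and the constantness of the initial datum, then turns this contribution into exactly $\alpha\sqrt{2\L(0)}\,D(t)$.

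For the velocity-derivative piece I would apply Young's inequality with parameter $\eps>0$ to the inner product $\langle v_{ij}(t-s),\dot v_i(t-s)-\dot v_j(t-s)\rangle$. The $|v_{ij}|^2$ half reproduces $2\eps D(t)$. The $|\dot v_i-\dot v_j|^2$ half is handled by plugging in \eqref{CS_delay2} for each $\dot v_i(t-s)$, applying Cauchy--Schwarz to the sum over $j$, Jensen's inequality to the integral against the probability measure $\P$, and finally the bound $\psi\le 1$ from \eqref{ass:psi0}. This produces a bound on $\sum_i|\dot v_i(\tau)|^2$ of the form $\tfrac{2\lambda^2}{N}D(\tau)$; combining it with the elementary inequality $|a-b|^2\le 2|a|^2+2|b|^2$ and then integrating in $s$ against $\d\P(s)$ yields the desired $\tfrac{2\lambda^2}{\eps}\int_0^\infty D(t-s)\,\d\P(s)$ term. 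Summing the two contributions gives \eqref{D_ineq}.

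The main technical difficulty is the careful tracking of constants through the nested estimates (Young's in $\eps$, Cauchy--Schwarz in the agent index, Jensen's in the delay variable), so that the prefactors come out precisely as $2\eps+\alpha\sqrt{2\L(0)}$ and $\tfrac{2\lambda^2}{\eps}$. A conceptually minor but important point is that the estimate is two-sided: the same chain of inequalities bounds both $+\tot{}{t}D(t)$ and $-\tot{}{t}D(t)$, which is why the lemma is stated for $|\tot{}{t}D(t)|$ rather than a signed dissipation inequality.
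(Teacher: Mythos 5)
Your proposal is correct and follows essentially the same route as the paper's proof: split $\tot{}{t}D(t)$ into the $\psi'$-contribution (bounded via \eqref{ass:psi2} and the uniform velocity bound $\sqrt{2\L(0)}$ from Lemma \ref{lem:Lyap}) and the velocity-derivative contribution (Young with $\eps$, then \eqref{CS_delay2} with Jensen and $\psi\le 1$ to get $\sum_i|\dot{v}_i(t-s)|^2\le \tfrac{2\lambda^2}{N}D(t-s)$). The only cosmetic difference is that the paper symmetrizes $\langle \widetilde v_i-\widetilde v_j,\dot{\widetilde v}_i-\dot{\widetilde v}_j\rangle = 2\langle \widetilde v_i-\widetilde v_j,\dot{\widetilde v}_i\rangle$ before applying Young, whereas you apply Young first and then $|a-b|^2\le 2|a|^2+2|b|^2$; both orderings yield the identical constants $2\eps+\alpha\sqrt{2\L(0)}$ and $\tfrac{2\lambda^2}{\eps}$.
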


\begin{proof}
For better legibility of the proof, let us adopt the notational convention that all quantities marked with a tilde
are evaluated at time point $t-s$, i.e., $\widetilde{v}_{i}:=v_i(t-s)$, $\widetilde{x}_{i}:=x_i(t-s)$,
$\widetilde{\psi}_{ij} = \psi(|\widetilde x_{i}- \widetilde x_{j}|)$, etc.
With this notation, we have
\[
   {D}(t) = \frac{1}{2} \sum_{i=1}^N \sum_{j=1}^N \int_0^\infty \widetilde\psi_{ij} |\widetilde v_{i}- \widetilde v_{j}|^{2} \, \d P(s),
\]
and differentiation in time and triangle inequality gives, for $t>0$,
\begin{align}
        \nonumber
   \left| \tot{}{t} {D}(t) \right| &\leq \frac{1}{2} \sum_{i=1}^N \sum_{j=1}^N \int_0^\infty \left| \widetilde\psi_{ij}' \left\langle
        \frac{\widetilde x_{i}- \widetilde x_{j}}{|\widetilde x_{i}-\widetilde x_{j}|},\widetilde v_{i}-\widetilde v_{j}\right\rangle \right| |\widetilde v_{i}-\widetilde v_{j}|^{2} \,\d\P(s)   \nonumber \\
      & \qquad  + \left| \sum_{i=1}^N \sum_{j=1}^N \int_0^\infty \widetilde\psi_{ij}  \left\langle \widetilde v_{i}-\widetilde v_{j},
         \tot{\widetilde v_i}{t}-\tot{\widetilde v_j}{t} \right\rangle \,\d\P(s) \right|,
  \label{dD(t)}
\end{align}
where $\psi_{ij}'=\psi'(|\widetilde x_{i}- \widetilde x_{j}|)$.
By assumption \eqref{ass:psi2}, $|\psi'(r)|\leq \alpha \psi(r)$ for $r \geq 0$,
we have for the first term of the right-hand side
\begin{align*} \nonumber
       \frac{1}{2} \sum_{i=1}^N \sum_{j=1}^N \int_0^\infty \left| \widetilde\psi_{ij}' \left\langle
        \frac{\widetilde x_{i}- \widetilde x_{j}}{|\widetilde x_{i}-\widetilde x_{j}|},\widetilde v_{i}-\widetilde v_{j}\right\rangle \right|  |\widetilde v_{i}-\widetilde v_{j}|^{2} \,\d\P(s)
            &\leq \frac{\alpha}{2} \sum_{i=1}^N \sum_{j=1}^N \int_0^\infty \widetilde \psi_{ij} |\widetilde v_{i}- \widetilde v_{j}| |\widetilde v_{i}- \widetilde v_{j}|^2 \,\d\P(s) \\
   &\leq \frac{\alpha\sqrt{2\L(0)}}{2}
   \sum_{i=1}^N \sum_{j=1}^N \int_0^\infty \widetilde \psi_{ij} |\widetilde v_{i}-\widetilde v_{j}|^2 \,\d\P(s) \nonumber \\
   & =  \alpha\sqrt{2\L(0)} D(t),
\end{align*}
where in the second inequality we used the bound
\[
   |\widetilde v_i - \widetilde v_j| \leq \sqrt{2V(t-s)} \leq \sqrt{2\L(0)},
\]
provided for $t-s>0$ by Lemma \ref{lem:Lyap}. For $t-s\leq 0$ it holds trivially due to the constantness
of the initial datum. 

For the second term of the right-hand side of \eqref{dD(t)}
we apply the symmetrization trick,
\[
    \sum_{i=1}^N \sum_{j=1}^N \int_0^\infty \widetilde\psi_{ij} \left\langle \widetilde v_{i}-\widetilde v_{j},
         \tot{\widetilde v_i}{t}-\tot{\widetilde v_j}{t} \right\rangle \,\d\P(s)
   = 2 \sum_{i=1}^N \sum_{j=1}^N \int_0^\infty \widetilde\psi_{ij} \left\langle \widetilde v_{i}-\widetilde v_{j},
         \tot{\widetilde v_i}{t} \right\rangle \,\d\P(s),
\]
and estimate using the Cauchy-Schwartz inequality with some $\eps>0$ and the bound $\psi\leq 1$ imposed by assumption \eqref{ass:psi0},
\[
   2 \left|  \sum_{i=1}^N \sum_{j=1}^N \int_0^\infty \widetilde\psi_{ij} \left\langle \widetilde v_{i}-\widetilde v_{j},
         \tot{\widetilde v_i}{t} \right\rangle \,\d\P(s) \right|
   \leq \eps \sum_{i=1}^N \sum_{j=1}^N \int_0^\infty \widetilde\psi_{ij} |\widetilde v_{i}- \widetilde v_{j}|^2 \,\d\P(s)
      +\frac{N}{\eps} \sum_{i=1}^N  \int_0^\infty \left| \tot{\widetilde v_i}{t} \right|^2 \,\d\P(s).
\]
The first term of the right-hand side is equal to $2\eps D(t)$, while for the second term, for $t-s>0$, we have
with \eqref{CS_delay2}, the Jensen inequality and the bound $\psi\leq 1$,
\begin{align*}
   \sum_{i=1}^N \left| \tot{\widetilde v_i}{t} \right|^2 &= \frac{\lambda^2}{N^2} \sum_{i=1}^N   \left| \sum_{j=1}^N
       \int_0^\infty \psi_{ij}(t-s-\sigma) (v_j(t-s-\sigma) - v_i(t-s-\sigma)) \,\d\P(\sigma) \right|^2 \\
     &\leq
     \frac{\lambda^2}{N} \sum_{i=1}^N  \sum_{j=1}^N \int_0^\infty \psi_{ij} (t-s-\sigma) \bigl| v_j(t-s-\sigma) - v_i(t-s-\sigma) \bigr|^2 \,\d\P(\sigma) \\
     & =
     \frac{2\lambda^2}{N} D(t-s).
\end{align*}
For $t-s < 0$ we have $\tot{\widetilde v_i}{t} \equiv 0$ due to the constant initial datum.

Combining the above estimates in \eqref{dD(t)}, we finally arrive at
\[
   \left| \tot{}{t} {D}(t) \right| \leq \left(2\eps + \alpha\sqrt{2\L(0)}\right) D(t) + \frac{2\lambda^2}{\eps} \int_0^\infty D(t-s) \,\d\P(s),
\]
which immediately gives \eqref{D_ineq}.
\end{proof}

The following lemma constitutes the core of the forward-backward estimate method
and was proved in \cite[Lemma 3.5]{HasMar}.
We reprint it here for the sake of the reader.

\begin{lemma} \label{lem:delayEst}
Let $y\in C(\R)$ be a nonnegative function,
continuously differentiable on $(0,\infty)$ and constant on $(-\infty,0]$.
Let the differential inequality
\(   \label{ass:y1}
    |\dot{y}(t)| \leq C_1 y(t) + C_2 \int_0^\infty y(t-s) \,\d\P(s) \qquad \text{for all } t > 0
\)
be satisfied with some constants $C_1 ,C_2 > 0$.

If there exists some $\kappa>0$ such that
\(  \label{ass:y3}
    \kappa > \max \left\{ \frac{|\dot{y}(0+)|}{y(0)}  , C_1+C_2  \Mexp[\kappa]  \right\},
\)
then the following forward-backward estimate holds for all $t>0$ and $s>0$
\(  \label{fb}
   e^{-\kappa s} y(t) < y(t-s) < e^{\kappa s} y(t).
 \)
\end{lemma}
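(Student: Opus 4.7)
My plan is a continuity/bootstrap argument applied to $\phi(t):=\log y(t)$. Assuming without loss of generality that $y(0)>0$, the forward-backward estimate \eqref{fb} is equivalent to the conjunction of $y>0$ on $[0,\infty)$ and the strict Lipschitz-type bound $|\phi(t_1)-\phi(t_2)|<\kappa|t_1-t_2|$ for all $0\leq t_1\neq t_2$ (using the convention $y(t)\equiv y(0)$ for $t\leq 0$, consistent with the constant extension of the data). Setting $\tilde\kappa := C_1+C_2\Mexp[\kappa]$, hypothesis \eqref{ass:y3} delivers the strict gap $\tilde\kappa<\kappa$, and exploiting this gap is what will drive the bootstrap.

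I would define
\[
   T^*:=\sup\bigl\{T>0:\, y>0\text{ on }[0,T]\text{ and }|\phi(t)-\phi(t')|\leq\kappa|t-t'|\ \forall\, t,t'\in[0,T]\bigr\}
\]
and show $T^*=\infty$. Initialization $T^*>0$ follows because the constancy of $y$ on $(-\infty,0]$ gives $\int_0^\infty y(-s)\,\d\P(s)=y(0)$, so \eqref{ass:y1} at $t=0^+$ yields $|\dot y(0+)|/y(0)\leq C_1+C_2\leq\tilde\kappa<\kappa$ (using $\Mexp[\kappa]\geq 1$); continuity of $F(t):=|\dot y(t)|/y(t)$ on $(0,\infty)$ together with dominated convergence in the integral term of \eqref{ass:y1} then keeps $F<\kappa$ on a neighborhood of $0$.

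The core of the proof is the implication $T^*<\infty\Rightarrow$ contradiction. By continuity, $y>0$ and the Lipschitz bound extend to the closed interval $[0,T^*]$. For any $t\in(0,T^*]$ and $s>0$ I split into two cases: if $t-s\geq 0$, the Lipschitz bound gives $y(t-s)/y(t)\leq e^{\kappa s}$ directly; if $t-s<0$, the constant extension gives $y(t-s)=y(0)\leq e^{\kappa t}y(t)\leq e^{\kappa s}y(t)$. Integrating against $\d\P(s)$ in either case yields $\int_0^\infty y(t-s)\,\d\P(s)\leq y(t)\Mexp[\kappa]$, and substituting into \eqref{ass:y1} produces the strictly sharper bound $F(t)\leq\tilde\kappa<\kappa$ on $(0,T^*]$. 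Since $F$ is continuous and $F(T^*)\leq\tilde\kappa<\kappa$, there exists $\delta>0$ with $F<\kappa$ on $[0,T^*+\delta]$; integrating then extends the Lipschitz bound past $T^*$, contradicting maximality. Hence $T^*=\infty$, and since the derivation in fact yields $|\dot y|/y\leq\tilde\kappa$ everywhere, $\phi$ is $\tilde\kappa$-Lipschitz on $[0,\infty)$, which unwinds to \eqref{fb} with strict inequality because $\tilde\kappa<\kappa$.

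The main obstacle I expect is the intrinsic circularity of the setup: to bound $\dot y(t)$ via \eqref{ass:y1} one needs a priori control on $\int y(t-s)\,\d\P(s)/y(t)$, which is essentially the forward-backward estimate one is trying to establish. The strict gap $\kappa-\tilde\kappa>0$ afforded by \eqref{ass:y3} is precisely what breaks the circle: under the bootstrap hypothesis one obtains the uniformly sharper bound $\tilde\kappa$, and continuity then propagates the good interval past any finite $T^*$. A subsidiary technical point is the handling of the boundary case $t-s\leq 0$ in the integral, but this is absorbed cleanly by the constant extension of $y$.
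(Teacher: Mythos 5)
Your proof is correct and follows essentially the same continuity/bootstrap argument as the paper: both exploit the strict gap $\kappa > C_1+C_2\Mexp[\kappa]$ to show that the first time at which $|\dot y|/y$ reaches $\kappa$ cannot be finite, by feeding the integrated bound $\int_0^\infty y(t-s)\,\d\P(s)\leq \Mexp[\kappa]\,y(t)$ back into \eqref{ass:y1}. Your extra bookkeeping (tracking positivity of $y$, the $t-s<0$ case via the constant extension, and deriving the $|\dot y(0+)|/y(0)$ bound from the constant datum) merely makes explicit details the paper leaves implicit.
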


\begin{proof}
Due to the assumed continuity of $y(t)$ and $\dot y(t)$
on $(0,\infty)$, \eqref{ass:y3} implies
that there exists $T>0$ such that
\(  \label{proof:y1}
  - \kappa < \frac{\dot{y}(t)}{y(t)} < \kappa \qquad\mbox{for all } t < T.
\)
We claim that \eqref{proof:y1} holds for all $t\in\R$, i.e., $T=\infty$.
For contradiction, assume that $T<\infty$, then again by continuity we have
\( \label{proof:y2}
   |\dot{y}(T)| = \kappa y(T).
\)
Integrating \eqref{proof:y1} on the time interval $(T-s,T)$ with $s>0$ yields
\[
   e^{-\kappa s} y(T) < y(T-s) < e^{\kappa s} y(T).
\]
Using this with \eqref{ass:y1} gives
\begin{align*}
    |\dot{y}(T)| &\leq C_1 y(T) +  C_2 \int_0^\infty y(T-s) \,\d\P(s) \\
     &< \left( C_1 +  C_2 \int_0^\infty e^{\kappa s} \,\d\P(s) \right) y(T)
     = \bigl( C_1 +  C_2 \Mexp[\kappa] \bigr) y(T).
\end{align*}
Assumption \eqref{ass:y3} gives then
\[
   |\dot{y}(T)| < \kappa y(T),
\]
which is a contradiction to \eqref{proof:y2}.
Consequently, \eqref{proof:y1} holds with $T:=\infty$,
and an integration on the interval $(t-s,t)$ implies \eqref{fb}.
\end{proof}

We now apply the result of Lemma \ref{lem:delayEst}
to derive a backward-forward estimate on the quantity
$D=D(t)$ defined in \eqref{def:D}.

\begin{lemma}\label{lem:estD3}
Let $\kappa>0$ be such that
\(   \label{ass:D2}
   \kappa > 4\lambda\sqrt{\Mexp[\kappa]} + \alpha\sqrt{2\L(0)},
\)
where $\dot{D}(0+)$ denotes the right-hand side derivative of $D(t)$ at $t=0$
along the solution of the system \eqref{CS_delay1}--\eqref{CS_delay2}.

Then for all $t>0$ and $s>0$,
\(  \label{fbV}
    e^{-\kappa s} D(t) < D(t-s) < e^{\kappa s} D(t).
 \)
\end{lemma}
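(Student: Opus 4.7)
The plan is to apply the abstract forward-backward lemma (Lemma \ref{lem:delayEst}) directly with $y(t) := D(t)$, since Lemma \ref{lem:D_ineq} has already put $\dot D$ into precisely the form required by \eqref{ass:y1}. The two hypotheses of Lemma \ref{lem:delayEst} then need to be matched to the single hypothesis \eqref{ass:D2}: (a) the expression $C_1 + C_2\,\Mexp[\kappa]$ should collapse to $4\lambda\sqrt{\Mexp[\kappa]}+\alpha\sqrt{2\L(0)}$ for a clever choice of the free parameter $\eps$, and (b) the initial ratio $|\dot D(0+)|/D(0)$ should be controlled by the same quantity, using the constantness of the initial datum.

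For step (a), I would start from \eqref{D_ineq}, which has the form $|\dot D(t)| \leq C_1 D(t) + C_2 \int_0^\infty D(t-s)\,\d\P(s)$ with $C_1 = 2\eps + \alpha\sqrt{2\L(0)}$ and $C_2 = 2\lambda^2/\eps$. Optimizing over $\eps>0$ in $2\eps + (2\lambda^2/\eps)\Mexp[\kappa]$ via the elementary inequality $a+b\geq 2\sqrt{ab}$ (equality at $\eps = \lambda\sqrt{\Mexp[\kappa]}$) yields
\[
   C_1 + C_2\,\Mexp[\kappa] \;\leq\; 4\lambda\sqrt{\Mexp[\kappa]} + \alpha\sqrt{2\L(0)},
\]
so the assumption \eqref{ass:D2} immediately gives the second part of \eqref{ass:y3}. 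I would fix this optimal $\eps$ throughout.

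For step (b), I would exploit the fact that the initial datum is constant on $(-\infty,0]$, which implies $D(-s) = D(0)$ for every $s \geq 0$, hence $\int_0^\infty D(-s)\,\d\P(s) = D(0)$. Plugging $t=0+$ into \eqref{D_ineq} with the same choice $\eps = \lambda\sqrt{\Mexp[\kappa]}$ gives
\[
   \frac{|\dot D(0+)|}{D(0)} \;\leq\; 2\lambda\sqrt{\Mexp[\kappa]} + \alpha\sqrt{2\L(0)} + \frac{2\lambda}{\sqrt{\Mexp[\kappa]}}.
\]
Since $\Mexp[\kappa] \geq 1$ (because $e^{\kappa s}\geq 1$ and $\P$ is a probability measure), the last term is bounded by $2\lambda\sqrt{\Mexp[\kappa]}$, so the right-hand side is at most $4\lambda\sqrt{\Mexp[\kappa]}+\alpha\sqrt{2\L(0)}$, which is strictly less than $\kappa$ by \eqref{ass:D2}. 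This verifies the first part of \eqref{ass:y3}.

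With both hypotheses checked, Lemma \ref{lem:delayEst} applied to $y=D$ delivers exactly \eqref{fbV}. The only delicate point is step (b): one must explicitly invoke the constant-history assumption to turn the convolution integral at $t=0$ into a clean bound by $D(0)$; without that, the ratio $|\dot D(0+)|/D(0)$ would not obviously be dominated by the same expression that bounds $C_1 + C_2\,\Mexp[\kappa]$. Everything else is a direct substitution into the abstract lemma.
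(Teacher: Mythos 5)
Your proposal is correct and follows essentially the same route as the paper: apply Lemma \ref{lem:delayEst} to $y=D$ with the optimal $\eps=\lambda\sqrt{\Mexp[\kappa]}$, and control $|\dot D(0+)|/D(0)$ via \eqref{D_ineq} at $t=0$ using the constant history and $\Mexp[\kappa]\geq 1$. The only cosmetic difference is that the paper re-applies \eqref{D_ineq} at $t=0$ with $\eps=\lambda$ to get the bound $4\lambda+\alpha\sqrt{2\L(0)}$ directly, whereas you keep $\eps=\lambda\sqrt{\Mexp[\kappa]}$ and absorb the extra term afterwards; both land on the same conclusion.
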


\begin{proof}
We shall combine Lemma \ref{lem:D_ineq} with Lemma \ref{lem:delayEst} for $y:=D$, where we use formula \eqref{ass:y1} with
\[
   C_1 := 2\eps + \alpha\sqrt{2\L(0)}, \qquad C_2 := \frac{2\lambda^2}{\eps}.
\]
Clearly, we want to choose $\eps>0$ to minimize the expression $C_1+C_2  \Mexp[\kappa]$ in \eqref{ass:y3},
which leads to $\eps:=\lambda\sqrt{\M_\mathrm{exp}}$ and
\[
   C_1 + C_2\Mexp[\kappa] = 4\lambda\sqrt{\Mexp[\kappa]} + \alpha\sqrt{2\L(0)}.
\]
Therefore, condition \eqref{ass:y3} reads
\(   \label{ass:y3new}
   \kappa > \max \left\{ \frac{|\dot{D}(0+)|}{D(0)} , 4\lambda\sqrt{\Mexp[\kappa]} + \alpha\sqrt{2\L(0)} \right\}.
\)
To estimate the expression $\frac{|\dot{D}(0+)|}{D(0)}$, we apply Lemma \ref{lem:D_ineq} again,
this time with $t:=0$ and the optimal choice $\eps:=\lambda$.
Using the constantness of the initial datum, we have $D(s)\equiv D(0)$ for all $s<0$,
and \eqref{D_ineq} gives then
\[
   {|\dot{D}(0+)|} \leq  \left( 4\lambda + \alpha\sqrt{2\L(0)} \right) D(0).
\]
Since, by definition, $\Mexp[\kappa]\geq 1$ for $\kappa>0$, condition \eqref{ass:y3new} reduces to \eqref{ass:D2},
and we conclude.
\end{proof}

\subsection{Decay of the velocity fluctuations and flocking}\label{subsec:EstV}

In order to bound $D=D(t)$ from below by the quadratic velocity fluctuation $V=V(t)$,
we introduce the minimum interparticle interaction $\varphi = \varphi(t)$,
\(  \label{def:varphi}
   \varphi(t):=\min_{i,j=1,\cdots,N} \psi(|x_{i}(t)-x_{j}(t)|),
\)
and the position diameter
\(  \label{def:X}
   d_X(t):=\max_{i,j=1,\cdots,N} |x_i(t)-x_j(t)|.
\)
We then have the following estimate:

\begin{lemma} \label{lem:EstPhi}
Let the parameter $\lambda >0$ satisfy
\[
  \lambda\sqrt{\M_2} \leq \frac{1}{2}.
\]
Then along the solutions of \eqref{CS_delay1}--\eqref{CS_delay2}
we have
\(  \label{EstPhi}
   \varphi(t) \geq \psi\left( d_X(0) + \sqrt{2\L(0)}\,t \right) \qquad\mbox{for all } t>0.
\)
\end{lemma}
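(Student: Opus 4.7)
The plan is to combine the uniform bound on the quadratic velocity fluctuation from Lemma \ref{lem:Lyap} with the straightforward position dynamics $\dot x_i = v_i$, using the monotonicity of $\psi$ to transfer an upper bound on the position diameter into a lower bound on $\varphi$.

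The first step is to invoke Lemma \ref{lem:Lyap}, which is applicable since the hypothesis $\lambda\sqrt{\M_2} \leq 1/2$ is exactly $2\lambda\sqrt{\M_2} \leq 1$. This yields $V(t) \leq \L(t) \leq \L(0)$ for all $t\geq 0$. Since for any fixed indices $i,j$ the definition \eqref{def:V} gives $\frac{1}{2}|v_i(t)-v_j(t)|^2 \leq V(t)$, we deduce the uniform-in-time bound
\[
    |v_i(t)-v_j(t)| \leq \sqrt{2\L(0)} \qquad \text{for all } t\geq 0, \ i,j=1,\ldots,N.
\]

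Next, for any pair $i,j$, from $\dot x_i - \dot x_j = v_i - v_j$ we obtain $\tot{}{t}|x_i(t)-x_j(t)| \leq |v_i(t)-v_j(t)| \leq \sqrt{2\L(0)}$, so integration on $(0,t)$ gives
\[
    |x_i(t)-x_j(t)| \leq |x_i(0)-x_j(0)| + \sqrt{2\L(0)}\, t \leq d_X(0) + \sqrt{2\L(0)}\, t.
\]
Taking the maximum over $i,j$ yields $d_X(t) \leq d_X(0) + \sqrt{2\L(0)}\, t$. Finally, since $\psi$ is nonincreasing, the minimum in \eqref{def:varphi} is attained at the pair realizing the diameter, so $\varphi(t) = \psi(d_X(t))$, and monotonicity gives the desired estimate \eqref{EstPhi}.

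There is no serious obstacle here: the lemma is essentially a bookkeeping consequence of the a priori velocity bound already established. The only point worth flagging is that one must use the symmetric form \eqref{def:V} of $V$ (which controls every pairwise difference, not merely some average) in order to extract the pointwise velocity bound $\sqrt{2\L(0)}$ used in the spatial estimate.
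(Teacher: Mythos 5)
Your proposal is correct and follows essentially the same route as the paper: invoke Lemma \ref{lem:Lyap} to get the uniform bound $|v_i-v_j|\leq\sqrt{2\L(0)}$, integrate the position dynamics to bound $d_X(t)$ linearly in time, and use the monotonicity of $\psi$ to conclude $\varphi(t)=\psi(d_X(t))$. The only cosmetic difference is that the paper differentiates $|x_i-x_j|^2$ rather than $|x_i-x_j|$ directly, which sidesteps the non-differentiability of the norm at coincident positions, but this does not change the substance of the argument.
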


\begin{proof}
Since, by assumption, $\psi=\psi(r)$ is a nonincreasing function, we have
\(   \label{varphi_est}
   \varphi(t) = \min_{i,j=1,\cdots,N} \psi(|x_{i}(t)-x_{j}(t)|) = \psi(d_X(t)),
\)
with $d_X=d_X(t)$ defined in \eqref{def:X}.
Moreover, we have for all $i,j = 1,\cdots,N$,
\[
    \tot{}{t} |x_i - x_j|^2 \leq 2 |x_i-x_j| |v_i-v_j|,
\]
and Lemma \ref{lem:Lyap} gives
\[
   |v_i(t) - v_j(t)|^2 \leq 2V(t) \leq 2\L(0) \qquad\mbox{for all } t>0.
\]
Consequently,
\[
   \tot{}{t} |x_i - x_j|^2 \leq 2\sqrt{2\L(0)} |x_i-x_j|,
\]
and integrating in time and taking the maximum over all $i,j = 1,\cdots,N$ yields
\[
   d_X(t) \leq d_X(0) + \sqrt{2\L(0)}\, t,
\]
which combined with \eqref{varphi_est} directly implies \eqref{EstPhi}.
\end{proof}

We are now in position to provide a proof of Theorem \ref{thm:main}.
\begin{proof}
Let us recall the estimate \eqref{dVest} of Lemma \ref{lem:dV},
\[
   \tot{}{t} V(t) \leq 2(\delta - 1)\lambda {D}(t)
      + \frac{2\lambda^3}{\delta} \int_0^\infty s \int_{t-s}^{t} {D}(\sigma) \,\d\sigma\d\P(s).
\]
We apply \eqref{fbV} to the integral term
\[
   \int_{t-s}^{t} {D}(\sigma) \d\sigma = \int_{0}^{s} {D}(t-\sigma) \d\sigma < D(t) \int_{0}^{s} e^{\kappa\sigma} \d\sigma
      = D(t) \frac{e^{\kappa s}-1}{\kappa}.
\]
Consequently, we have
\begin{align*}
   \tot{}{t} V(t) \leq 2\lambda \left[ \delta - 1 + \frac{\lambda^2}{\delta} \int_0^\infty s  \frac{e^{\kappa s}-1}{\kappa}  \,\d\P(s) \right] D(t).
\end{align*}
Optimizing in $\delta>0$ gives $\delta:=\lambda\sqrt{\K[\kappa]}$, where we used
the definition \eqref{def:K} of $\K[\kappa] := \int_0^\infty s  \frac{e^{\kappa s}-1}{\kappa} \d\P(s)$, so that
\(  \label{totV}
   \tot{}{t} V(t) \leq  2\lambda \left[ 2 \lambda\sqrt{\K[\kappa]} - 1 \right] D(t).
\)
By the definition \eqref{def:varphi} of the minimal interaction $\varphi=\varphi(t)$ we have the estimate
\begin{align*}
   D(t) &= \frac12 \sum_{i=1}^N \sum_{j=1}^N \int_0^\infty {\psi}_{ij}(t-s) |{v}_{j}(t-s) - {v}_{i}(t-s)|^{2} \d\P(s) \\
      & \geq \frac12 \sum_{i=1}^N \sum_{j=1}^N \int_0^\infty {\varphi}(t-s) |{v}_{j}(t-s) - {v}_{i}(t-s)|^{2} \d\P(s) \\
      & \geq \frac{1}{2}  \psi\left( d_X(0) + \sqrt{2\L(0)}\,t \right) \sum_{i=1}^N \sum_{j=1}^N \int_0^\infty  |{v}_{j}(t-s) - {v}_{i}(t-s)|^{2} \d\P(s) \\
      &= \psi\left( d_X(0) + \sqrt{2\L(0)}\,t \right) \int_0^\infty V(t-s) \d\P(s),
\end{align*}
where for the last inequality we used \eqref{EstPhi} and the monotonicity of $\psi$,
\[
   \varphi(t-s) \geq \psi\left( d_X(0) + \sqrt{2\L(0)}\,(t-s) \right) \geq \psi\left( d_X(0) + \sqrt{2\L(0)}\,t \right).
\]
Now, if assumption \eqref{ass:K} is verified, \eqref{totV} implies that $V=V(t)$ is nonincreasing.
Thus we have $V(t-s)\geq V(t)$ for all $s>0$, and, consequently,
\(  \label{est:Dfrombelow}
  D(t) \geq \psi\left( d_X(0) + \sqrt{2\L(0)}\,t \right) V(t).
\)
Inserting into \eqref{totV} yields
\[
   \tot{}{t} V(t) \leq  2\lambda \left[ 2 \lambda\sqrt{\K[\kappa]} - 1 \right] \psi\left( d_X(0) + \sqrt{2\L(0)}\,t \right) V(t).
\]
Denoting $\omega:=-2\lambda\left[ 2 \lambda\sqrt{\K[\kappa]} - 1 \right]>0$ and integrating in time, we arrive at
\(   \label{flocking}
   V(t) \leq V(0) \exp\left(-\omega \int_0^t \psi\left(d_X(0) + \sqrt{2\L(0)} \,s \right) \d s \right).
\)
Consequently, if $\int^\infty \psi(s) \d s=\infty$,
we have the asymptotic convergence of the velocity fluctuation to zero,
$\lim_{t\to\infty} V(t) = 0$.

By assumption \ref{ass:psi1}, namely that $\psi(r) \geq Cr^{-1+\gamma}$ for all $r>R$,
we have, asymptotically for large $t>0$,
\[
    \int^t \psi\left(d_X(\tau) + \sqrt{2\L(0)} \,s \right) \d s \gtrsim t^\gamma.
\]
Consequently, from \eqref{flocking},
\[
   V(t) \lesssim \exp\left(-\omega t^\gamma \right).
\]
A slight modification of the proof of Lemma \ref{lem:EstPhi} gives
\[
   d_X(t) \leq d_X(0) + \int_0^t \sqrt{V(s)} \d s \lesssim d_X(0) + \int_0^t  \exp\left(-\omega s^\gamma/2 \right) \d s \qquad\mbox{for } t\geq 0.
\]
The integral on the right-hand side is uniformly bounded,
implying the uniform boundedness of the position diameter $d_X(t) \leq \bar d_X <+\infty$
for all $t>0$. This in turn implies $\varphi(t) \geq \psi(d_X(t)) \geq \psi(\bar d_X)$,
so that \eqref{est:Dfrombelow} is replaced by the sharper estimate
\[
     D(t) \geq \psi(\bar d_X) V(t).
\]
Thus we finally have, for all $t>0$,
\[
   \tot{}{t} V(t) \leq -\omega  \psi(\bar d_X) V(t),
\]
and conclude the exponential decay of the velocity fluctuations.
\end{proof}

\section{Examples of delay distributions} \label{sec:examples}
In this section we demonstrate how the flocking conditions \eqref{ass:M}--\eqref{ass:Mexp}
of Theorem \ref{thm:main} are resolved for particular delay distributions
- exponential, uniform on a compact interval and linear.
The conditions \eqref{ass:M}--\eqref{ass:Mexp} lead to systems of nonlinear inequalities in terms
of the distribution parameters.
For the exponential distribution they can be resolved analytically,
leading to an explicit flocking condition.
For the uniform and linear distributions they can be recast
as nonlinear minimization problems and easily resolved numerically,
using standard matlab procedures.

\subsection{Exponential distribution}\label{subsec:exponential}
We first consider the exponential distribution $\d P(s) = \mu^{-1} e^{-s/\mu} \d s$ with mean $\mu>0$.
We have for $\kappa<\mu^{-1}$,
\[
   \M_k = {k!}{\mu^k}, \qquad \Mexp[\kappa] = \frac{1}{1-\kappa\mu}, \qquad \K[\kappa] = \frac{2-\kappa\mu}{(1-\kappa\mu)^2}\mu^2.
\]
Condition \eqref{ass:M} reads $2\sqrt{2}\lambda\mu \leq 1$, and \eqref{ass:K} and \eqref{ass:Mexp} are satisfied if
there exists $\kappa>0$ such that
\[
   \frac{2\lambda\mu}{1-\kappa\mu} \sqrt{2-\kappa\mu} \leq 1,\qquad
   \qquad 4\lambda \sqrt{\frac{1}{1-\kappa\mu}} + \alpha\sqrt{2\L(0)} < \kappa.
\]
As our goal is to visualize the dependence of the flocking condition on the initial velocity fluctuation $V(0)$,
we shall work with the more restrictive version of \eqref{ass:Mexp} given by \eqref{ass:Mexpw},
which for the exponential distribution reads
\[
      4\lambda \sqrt{\frac{1}{1-\kappa\mu}}  + \alpha\sqrt{2\left(1+\frac{12(\lambda\mu)^2}{\sqrt{2}}\right) V(0)} < \kappa.
\]
Moreover, due to scaling properties, it is more convenient to investigate the flocking conditions
in terms of the product $\lambda\mu$ 
and rescale $V(0)$ by $\lambda^2$.
In this form the flocking conditions read 
\( \label{cond:exp}
   \lambda\mu \leq (2\sqrt{2})^{-1},\qquad
   \frac{2\lambda\mu}{1-\kappa\mu} \sqrt{ 2- \kappa\mu} \leq 1,\qquad
   4\lambda\mu \sqrt{\frac{1}{1-\kappa\mu}} + \alpha\lambda\mu \sqrt{2\left(1+\frac{12}{\sqrt{2}} (\lambda\mu)^2 \right) \frac{V(0)}{\lambda^2}} < \kappa\mu.
\)
The second condition in \eqref{cond:exp} is easily resolved for $\kappa\mu$,
\[  
   \kappa\mu \leq 1 - 2(\lambda\mu)^2 - 2\lambda\mu\sqrt{(\lambda\mu)^2+1},
\]
and the third condition is readily shown to be equivalent to
\[
     \sqrt[3]{32(\lambda\mu)^2} + \alpha\lambda\mu \sqrt{2\left(1+\frac{12}{\sqrt{2}} (\lambda\mu)^2\right) \frac{V(0)}{\lambda^2}} < \kappa\mu.
\]
Consequently, $\kappa\mu$ can be eliminated from \eqref{cond:exp} and we arrive at
\[
   \sqrt[3]{32(\lambda\mu)^2} + \alpha\lambda\mu \sqrt{2\left(1+\frac{12}{\sqrt{2}} (\lambda\mu)^2\right) \frac{V(0)}{\lambda^2}} <
    1 - 2(\lambda\mu)^2 - 2\lambda\mu\sqrt{(\lambda\mu)^2+1}.
\]
This is again is easily resolved for ${V(0)}/{\lambda^2}$ and we finally obtain
\(  \label{crit_exp}
   \frac{V(0)}{\lambda^2} \leq \left[ 2\left(1+\frac{12}{\sqrt{2}} (\lambda\mu)^2 \right) \right]^{-1}
      \left( \frac{1}{\alpha\lambda\mu} \left[ 1 - 2(\lambda\mu)^2 - 2\lambda\mu\sqrt{(\lambda\mu)^2+1} \right] \right)^2.
\)
The critical value of ${V(0)}/{\lambda^2}$ in dependence of $\lambda\mu$ for $\alpha:=1$ is plotted in Fig. \ref{fig:exp}.
Note that the vertical axis is in logarithmic scale.
Let us finally remark that for values of $\lambda\mu$ close to zero, the critical value of $V(0)$
behaves like $\mu^{-2}$.

\begin{figure}
\centerline{
\includegraphics[width=0.6\columnwidth]{./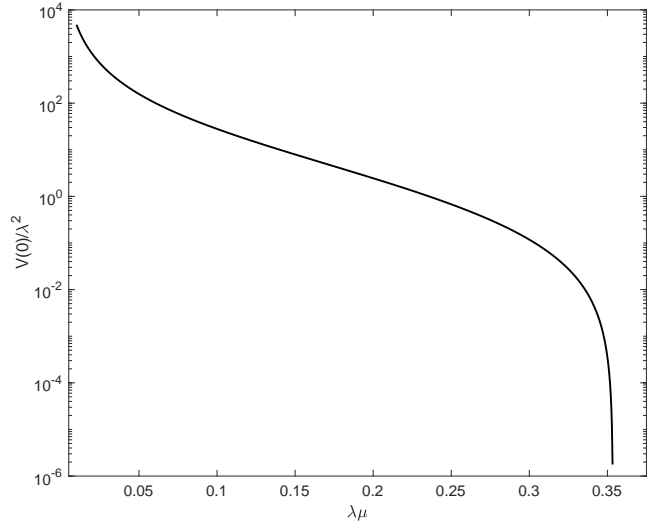}}
\caption{Critical value of ${V(0)}/{\lambda^2}$ (logarithmic scale) in dependence on $\lambda\mu\in \left[0.01, (2\sqrt{2})^{-1}\right]$ for $\alpha:=1$, as given by \eqref{crit_exp}.
}
\label{fig:exp}
\end{figure}

\subsection{Uniform distribution}\label{subsec:uniform}
Our second example is the uniform distribution on the interval $[A,B]$ with $0\leq A<B$, i.e.,
$\d P(s) = \frac{1}{B-A}\chi_{[A,B]}(s) \,\d s$. The relevant moments are
\[
   \M_k = \frac{B^{k+1}-A^{k+1}}{(k+1)(B-A)},\qquad
   \Mexp[\kappa] = \frac{ e^{\kappa B} - e^{\kappa A}}{(B-A)\kappa},\qquad
   \K[\kappa] = \frac{1}{(B-A)\kappa^2} \left( Be^{\kappa B} - Ae^{\kappa A} - \frac{e^{\kappa B} - e^{\kappa A}}{\kappa} \right).
\]
Due to the scaling relations, it is convenient to express the flocking conditions in terms of
$a:=\lambda A$, $b:=\lambda B$ and $\bar\kappa:=\kappa\lambda^{-1}$.
Condition \eqref{ass:M} reads then
\(  \label{ab}
   \frac{4}{3} \left( b^2 + ab+ a^2 \right) \leq 1,
\)
 and \eqref{ass:K}
 \(  \label{ab1}
    \frac{4}{(b-a)\bar\kappa^2} \left( be^{\bar\kappa b} - ae^{\bar\kappa a} - \frac{e^{\bar\kappa b} - e^{\bar\kappa a}}{\bar\kappa} \right) \leq 1.
 \)
Condition \eqref{ass:Mexpw} reads
 \( \label{ab2}
    4 \sqrt{\frac{ e^{\bar\kappa b} - e^{\bar\kappa a}}{(b-a)\bar\kappa}} + \alpha \sqrt{2 \left( 1+ \frac{\sqrt{3}}{2} \frac{b^3+b^2a+ba^2+a^3}{\sqrt{b^2+ab+a^2}} \right) \frac{V(0)}{\lambda^2} } < \bar\kappa.
 \)
Deciding satisfiability (in terms of $\bar\kappa>0$) of the above conditions seems to be prohibitively complex
for the analytical approach. However, the problem is well approachable numerically.
For each pair $(a,b)$ satisfying \eqref{ab}, the conditions \eqref{ab1}--\eqref{ab2} can be recast as a minimization problem
in $\bar\kappa$, and deciding satisfiability accounts to checking if the minimum is negative.
The minimization problem can be efficiently solved using the matlab procedure {\tt fminbnd}
if we provide lower and upper bounds on $\bar\kappa$.
These can be obtained analytically. Indeed, carrying our Taylor expansion of the exponentials in \eqref{ab1} we see that
\[
   \frac{4}{(b-a)\bar\kappa^2} \left( be^{\bar\kappa b} - ae^{\bar\kappa a} - \frac{e^{\bar\kappa b} - e^{\bar\kappa a}}{\bar\kappa} \right)
      \geq \frac{2(a+b)}{\bar\kappa} + \frac{4}{3} \left( a^2 + ab + b^2 \right) + \frac{\bar\kappa}{2} \left(a^3+a^2b+ab^2+b^3 \right).
\]
Combining this estimate with \eqref{ab1} gives a necessary condition for its satisfiability in terms
of explicit (in $a$ and $b$) lower and upper bounds on $\bar\kappa$, which are roots of the corresponding quadratic polynomial.
We do not print the rather lengthy algebraic expressions here; let us just mention that an immediate rough lower bound is $\bar\kappa \geq 2(a+b)$.

We carried out two numerical studies. First, we fixed the values of $\alpha:=1$ and $\frac{V(0)}{\lambda^2}:=1$
and plotted the critical value of the interval length $(b-a)$ in dependence of the value of $a>0$, see Fig. \ref{fig:uniform1}.
We see that the flocking conditions \eqref{ab1}--\eqref{ab2} are satisfiable for $a$ at most approx. $0.16$,
while for $a$ approaching zero, the interval length can go up to approx. $0.26$.
In the second study, we fixed $a:=0$ and plotted critical value of the initial fluctuation ${V(0)}/{\lambda^2}$
in dependence on the interval length $b>0$, Fig. \ref{fig:uniform2}.

\begin{figure}
\centerline{
\includegraphics[width=0.6\columnwidth]{./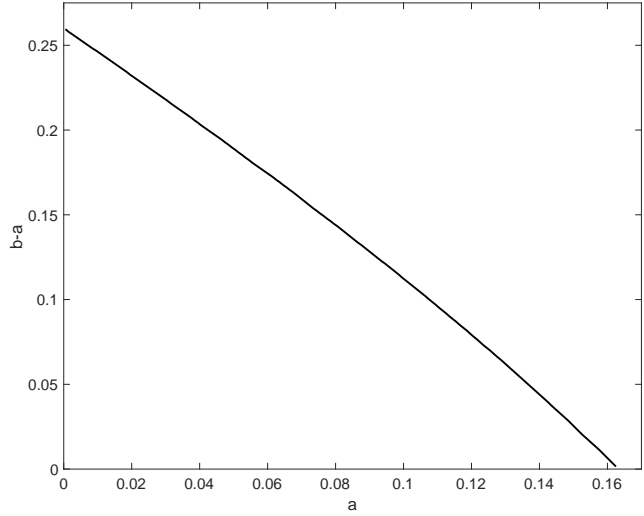}}
\caption{Critical value of the interval length $b-a$ in dependence on the value of $a>0$, obtained by numerical resolution
of the flocking condition for the uniform distribution, with $\alpha:=1$ and $\frac{V(0)}{\lambda^2}:=1$.
}
\label{fig:uniform1}
\end{figure}

\begin{figure}
\centerline{
\includegraphics[width=0.6\columnwidth]{./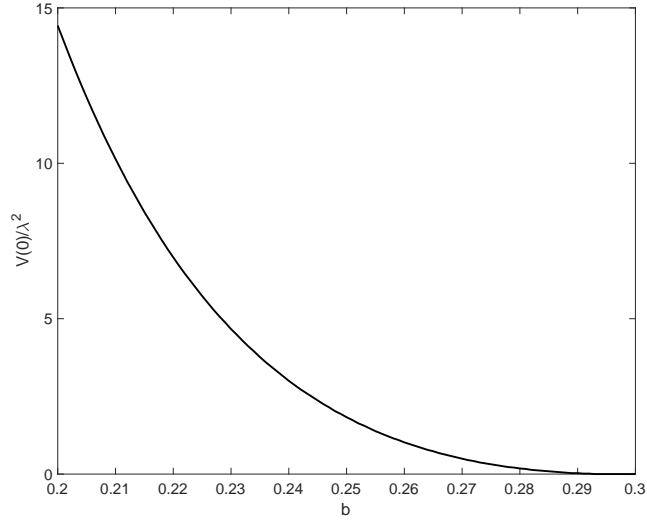}}
\caption{Critical value of the initial fluctuation ${V(0)}/{\lambda^2}$ in dependence on the value of the parameter $b\in [0.2,0.3]$,
obtained by numerical resolution of the flocking condition for the uniform distribution with $a:=0$. We set $\alpha:=1$.
}
\label{fig:uniform2}
\end{figure}

\subsection{Linear distribution}\label{subsec:linear}
Our third example is the linear distribution on the interval $[0,A]$ with $A>0$, i.e.,
$\d\P(s) = \frac{2}{A^2} [A-s]^+ \d s$, where $[A-s]^+ = \max\{0,A-s\}$.
We have
\[
   \M_k = \frac{2A^k}{(k+1)(k+2)},\qquad
   \Mexp[\kappa] = \frac{2}{\kappa A} \left( \frac{e^{A\kappa}-1}{A\kappa} -1 \right),\qquad
   \K[\kappa] = \frac{1}{\kappa} \left[ \frac{2(e^{A\kappa}+1)}{A\kappa^2} + \frac{4(1-e^{A\kappa})}{A^2\kappa^3} - \frac{A}{3} \right].
\]
Due to the scaling relations, it is again convenient to express the flocking conditions in terms of
$a:=\lambda A$, $\bar\kappa:=\kappa\lambda^{-1}$ and $V(0)/\lambda^2$.
Condition \eqref{ass:M} reads then $a\leq\sqrt{3/2}$, while \eqref{ass:K} and \eqref{ass:Mexpw} are of the form
\(  \label{cond:lin}
    \frac{4}{\bar\kappa} \left[ \frac{2(e^{a\bar\kappa}+1)}{a\bar\kappa^2} + \frac{4(1-e^{a\bar\kappa})}{a^2\bar\kappa^3} - \frac{a}{3} \right] < 1, \qquad
    4 \sqrt{ \frac{2}{\bar\kappa a} \left( \frac{e^{a\bar\kappa}-1}{a\bar\kappa} -1 \right) } + \alpha \sqrt{ 2 \left( 1 + \frac{a^2\sqrt{6}}{5}\right) \frac{V(0)}{\lambda^2}} < \bar\kappa.
\)
We are interested in the dependence of the critical value of the rescaled initial fluctuation $V(0)/\lambda^2$
on the parameter value $a$. We approach the above satisfiability problem numerically, in two steps.
First, we observe that for any fixed $a\in (0,\sqrt{3/2})$, the function
\[
   f_a(\bar\kappa) := \frac{4}{\bar\kappa} \left[ \frac{2(e^{a\bar\kappa}+1)}{a\bar\kappa^2} + \frac{4(1-e^{a\bar\kappa})}{a^2\bar\kappa^3} - \frac{a}{3} \right]
\]
is an increasing function of $\kappa>0$; this is easily seen carrying out the Taylor expansion of the exponentials.
Moreover, $\lim_{\bar\kappa\to 0+} f_a(\bar\kappa) = 2a^2/3 < 1$. Consequently, there exists $\bar{\bar\kappa}_a>0$
such that the first condition of \eqref{cond:lin} is equivalent to $\bar\kappa\in (0,\bar{\bar\kappa}_a)$.
The value of $\bar{\bar\kappa}_a$ is conveniently calculable using the matlab procedure {\tt fminsearch},
profiting from the monotonicity of the function $f_a$.
In the second step, we numerically solve the maximization problem
\[
   \max_{\bar\kappa\in (0,\bar{\bar\kappa}_a)} \left( \kappa - 4 \sqrt{ \frac{2}{\bar\kappa a} \left( \frac{e^{a\bar\kappa}-1}{a\bar\kappa} -1 \right) } \right),
\]
employing the matlab procedure {\tt fminbnd}. Then the second condition of \eqref{cond:lin} is easily resolved
for the critical value of $V(0)/\lambda^2$.
The outcome of this procedure for $\alpha:=1$ is plotted in Fig. \eqref{fig:linear}.

\begin{figure}
\centerline{
\includegraphics[width=0.6\columnwidth]{./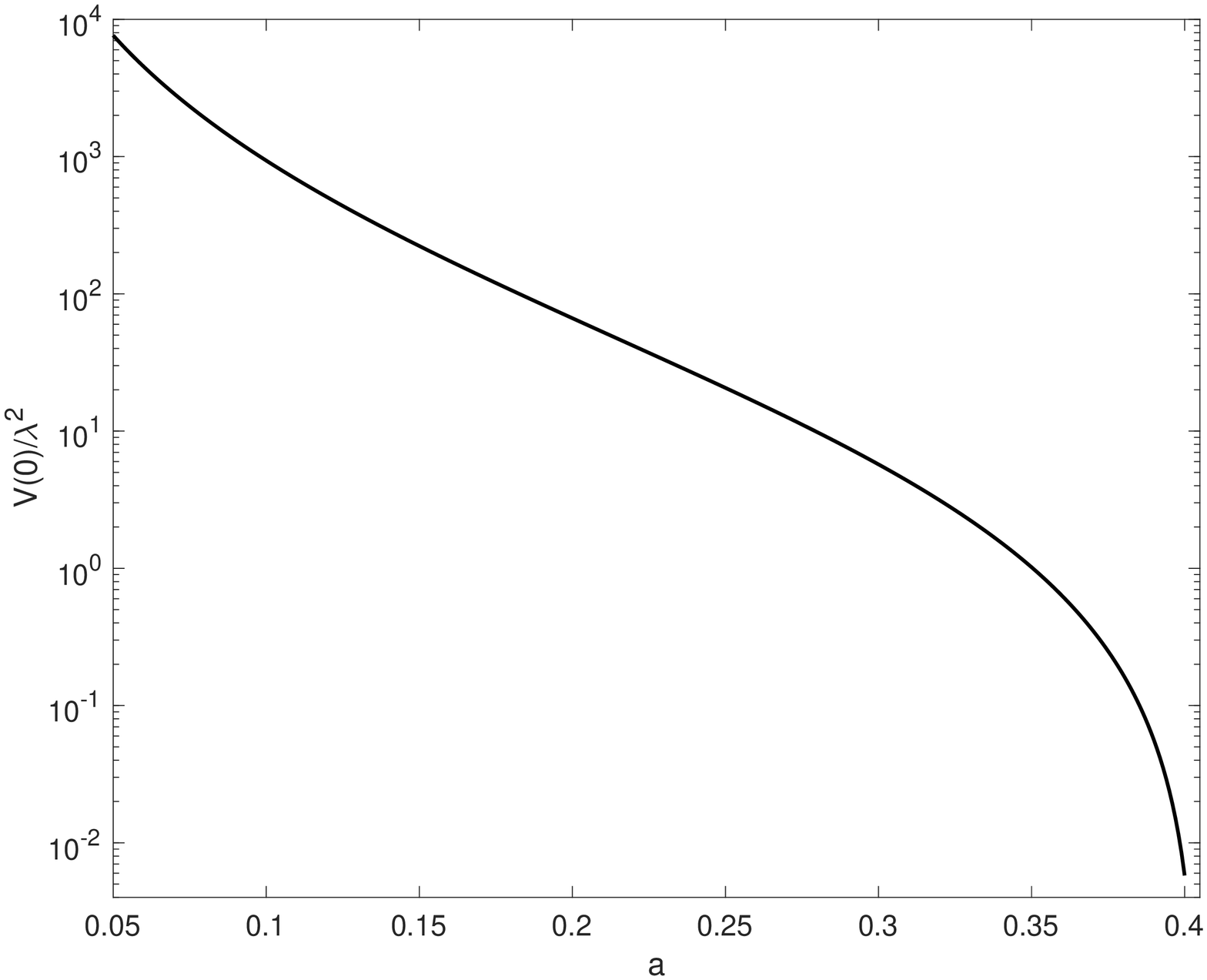}}
\caption{Critical value of the initial fluctuation ${V(0)}/{\lambda^2}$ (logarithmic scale) in dependence on the value of the parameter $a\in [0.05,0.4]$,
obtained by numerical resolution of the flocking condition for the linear distribution. We set $\alpha:=1$.
}
\label{fig:linear}
\end{figure}

\section*{Acknowledgment}
JH acknowledges the support of the KAUST baseline funds.

\quad (Jan Haskovec)
\\
\textsc{Computer, Electrical and Mathematical Sciences \&
Engineering}
\\
\textsc{King Abdullah University of Science and Technology, 23955
Thuwal, KSA}

\quad E-mail address: \textbf{jan.haskovec@kaust.edu.sa}
\\ \\

\quad (Ioannis Markou)
\\
\textsc{Institute of Applied and Computational Mathematics
(IACM-FORTH)}
\\
\textsc{N. Plastira 100, Vassilika Vouton GR - 700 13, Heraklion,
Crete, Greece}

\quad E-mail address: \textbf{ioamarkou@iacm.forth.gr}

\end{document}